\documentclass{article}
\usepackage{amsfonts}
\usepackage{amssymb}
\usepackage{amsmath}
\usepackage{amsthm}
\usepackage{cite}
\usepackage{CJK}

\numberwithin{equation}{section}

\newtheorem{theorem}{Theorem}[section]
\newtheorem{lemma}{Lemma}[section]
\newtheorem{remark}{Remark}[section]
\newtheorem{example}{Example}[section]

\date{}

\begin{document}

\title {Meromorphic solutions of a nonlinear differential-difference equation with two exponential terms}

\author{Min Feng CHEN
\thanks{Corresponding author.
This research was supported by the National Natural
Science Foundation of  China (Nos. 12001117, 12501094).}
and Zong Sheng Gao\\
}\maketitle
\date{}

\begin{minipage}{128mm}

\begin{flushleft}
\quad
\end{flushleft}

{\bf Abstract}\quad
Let $n\geq 2$ and $k\geq 1$ be integers. We study finite order meromorphic solutions of
the nonlinear differential-difference equation
\begin{equation*}
f^{n}(z)f'(z)+P_{d}(z,f)=p_{1}(z){\rm e}^{\alpha_{1}z^{k}}+p_{2}(z){\rm e}^{\alpha_{2}z^{k}},
\end{equation*}
where $P_{d}(z,f)\not\equiv 0$ is a difference polynomial in $f$ of degree $d\leq n-1$ whose
coefficients are small meromorphic functions with respect to $f$, $p_{1},~p_{2}$ are nonzero
meromorphic functions with $T(r,p_{j})=o(r^{k})$, $j=1,2$, and $\alpha_{1}\neq\alpha_{2}$ are
nonzero constants. We prove that every such solution has order exactly $k$, and obtain
sufficient conditions on $n,~d,~k$ for the equation to admit no finite order meromorphic
solution with $N(r,\frac{1}{f})=S(r,f)$. Our results extend the known ones from linear exponents $z$
to polynomial exponents $z^{k}$, and replace the linear difference polynomials with the degree
restriction $d\leq n-2$ assumed in the literature by general ones of degree $d\leq n-1$.
Examples show that the restrictions on $n$ and $d$ are sharp.\par

{\bf Key words}\quad Meromorphic solutions, Nonlinear DDEs, Difference polynomial,
Exponential Terms, Nevanlinna Theory\par

{\bf 2020 MR Subject Classification}\quad 30D30, 39A45, 39B32
\end{minipage}

\section{Introduction and Main results}
\indent\par In this paper, we will use Nevanlinna theory of meromorphic functions (see \cite{CZ1,H,L})
to prove our results. Let $f$ be a meromorphic function in the complex plane.
The notation $S(r,f)$ stands for any quantify satisfying $S(r,f)=o(T(r,f))$,
as $r\to\infty$, possibly outside of a set of $r$ of finite logarithmic measure.
In addition, the notations $\sigma(f)$ and $\lambda\left(\frac{1}{f}\right)$ denote the order of
growth of $f$ and the exponent of convergence of poles of $f$, respectively.
Let $N_{1)}\left(r,\frac{1}{f}\right)$ and $N_{(2}\left(r,\frac{1}{f}\right)$ denote the counting functions
corresponding to simple and multiple zeros of $f$, respectively.
In general, a difference polynomial in $f$ is defined to be a polynomial in $f$ and its shifts $f(z+c)$
($c\in \mathbb{C}\setminus\{0\}$) with small functions as its coefficients, that is, a difference polynomial
$P_{d}(z,f)$ in $f$ is denoted by
\begin{equation}\label{eq1.1}
P_{d}(z,f)=\sum_{\mu\in I}a_{\mu}(z)(f(z))^{\mu_{0}}(f(z+d_{1}))^{\mu_{1}}\cdots(f(z+d_{s}))^{\mu_{s}},
\end{equation}
where $d_{\nu}$ $(\nu=1,2,\ldots,s)$ are distinct non-zero constants, $\mu_{\nu}$ $(\nu=0,1,\ldots,s)$
are non-negative integers, $I$ is a finite set of the index $\mu$, $a_{\mu}(\mu\in I)$
are small meromorphic function of $f$. The degree of $P_{d}(z,f)$ is defined by
$d=\deg P_{d}(z,f)=\max\limits_{\mu\in I}\{|\mu|:|\mu|=\sum\limits_{\nu=0}^{s}|\mu_{\nu}|\}$.\par

\indent The existence of solutions is an important problem in the study of
complex functional equation. Yang and Li\cite{YC} proved that the differential equation
$4f^{3}(z)+3f''(z)=-\sin3z$ has exactly three transcendental entire solutions $f_{1}(z)=\sin z$,
$f_{2}(z)=\frac{\sqrt{3}}{2}\cos z-\frac{1}{2}\sin z$, $f_{2}(z)=-\frac{\sqrt{3}}{2}\cos z-\frac{1}{2}\sin z$.
Comparing a result from Yang and Laine\cite{YL} that the difference equation $f^{3}(z)+q(z)f(z+1)=c\sin bz$
has exactly three distinct entire solution of finite order if $q$ is a nonzero constant, while
if $q$ is a nonconstant polynomial, then it does not admit entire solutions of finite order,
where $b,c$ are non-zero constants. Note that $\sin z$ is a linear combination of the
${\rm e}^{iz}$ and ${\rm e}^{-iz}$,
the research on the more general differential (or difference) equations
\begin{equation}\label{eq1.2}
f^{n}(z)+P_{d}(z,f)=p_{1}{\rm e}^{\alpha_{1}z}+p_{2}{\rm e}^{\alpha_{2}z}
\end{equation}
has attracted much attention (see, e.g.\cite{CZ2,CM1,LY,LP,LYZ,LM2,LU}),
where $P_{d}(z,f)$ is a differential (or difference) polynomial in $f$ of
degree $d$, $p_{1},p_{2}$ are small meromorphic functions of $f$, $\alpha_{1},\alpha_{2}$
are distinct nonzero constants.\par

In 2017, Zhang, Xu and Liao\cite{Z} considered the equation similar to the equation \eqref{eq1.2}
with $f^{n}(z)$ replaced by $f^{n}(z)f'(z)$ and proved the following result.

{\bf Theorem~A}
Let $n\geq 3$ be an integer, $Q_d(z,f)$ be a differential polynomial in $f(z)$ of degree $d\leq n-2$
with rational functions as its coefficients, and let $p_1(z)$, $p_2(z)$ are two nonzero rational
functions, $\alpha_1(z)$, $\alpha_2(z)$ are nonzero polynomials. If the differential equation
\begin{equation}\label{eq1.3}
f^{n}(z)f'(z)+Q_{d}(z,f)=p_{1}(z){\rm e}^{\alpha_{1}(z)}+p_{2}{\rm e}^{\alpha_{2}(z)}
\end{equation}
admits a transcendental meromorphic solution $f(z)$ with finitely many poles, then
$\frac{\alpha'_1}{\alpha'_2}$ is a rational number, and $f(z)$ must be of the form
\begin{equation*}
  f(z)=q(z){\rm e}^{p(z)},
\end{equation*}
where $q(z)$ is a rational function and $p(z)$ is a non-constant polynomial. Furthermore, only
one of the following two cases holds:\\
(1) $\frac{\alpha'_1}{\alpha'_2}=1,Q_d(z,f)\equiv 0$ and $(n+1)p'(z)=\alpha'_1=\alpha'_2$;\\
(2) $\frac{\alpha'_1}{\alpha'_2}=\frac{n+1}{k},k(1\leq k\leq d)$~is an integer,
$Q_d(z,f)\equiv p_2(z){\rm e}^{\alpha_2(z)}$ and $(n+1)p'(z)=\alpha'_1(z)$, or
$\frac{\alpha'_1}{\alpha'_2}=\frac{k}{n+1},~k(1\leq k\leq d)$ is an integer,
$Q_d(z,f)\equiv p_1(z){\rm e}^{\alpha_1(z)}$ and $(n+1)p'(z)=\alpha_{2}'(z)$.\par

In 2019, Liu and Mao \cite{LM1} studied the existence of meromorphic solution of eqaution
\begin{equation}\label{eq1.4}
f^{n}(z)+P_{d}(z,f)=p_{1}(z){\rm e}^{\alpha_{1}(z)}+p_{2}(z){\rm e}^{\alpha_{2}(z)}
\end{equation}
where $\alpha_1(z)$, $\alpha_2(z)$ are nonzero polynomials, and obtained the following result.\par

{\bf Theorem~B}
Let $n(\geq 3)$ and $k(\geq 1)$ be integers, and $P_{d}(z,f)$ be a differential polynomial
in $f$ of degree $d\leq n-2$. Let $\alpha_{1},~\alpha_{2}$ be polynomials of degree $k$,
and $p_{1},~p_{2}$ be non-zero small meromorphic functions of ${\rm e}^{z^{k}}$. If
\begin{equation}\label{eq1.5}
\frac{\alpha_{1}^{(k)}}{\alpha_{2}^{(k)}}\not\in\left\{\frac{n}{n-1},\frac{n-1}{n},-1,1\right\},
\end{equation}
and one of the following conditions occurs:\par
1. $P_{d}(z,0)\not\equiv 0$,\\
\indent 2. $P_{d}(z,0)\equiv 0$,
$\frac{\alpha_{1}^{(k)}}{\alpha_{2}^{(k)}}\not\in\left\{\frac{n}{d},\frac{d}{n}\right\}$,\\
then Eq.\eqref{eq1.4} has no transcendental meromorphic solution of $f$ satisfying $N(r,f)=S(r,f)$.
\par

{\bf Question~1.1}
What can be said about the existence and the growth of finite order meromorphic solutions of
Eq.\eqref{eq1.4} if the dominant term $f^{n}(z)$ is replaced by $f^{n}(z)f'(z)$,
and $P_{d}(z,f)$ is taken to be a difference polynomial in $f$, rather than a differential polynomial?
\par

Motivated by Theorems A and B, we address Question 1.1 in the
case of finite order meromorphic solutions and obtain the following result.
\par

\begin{theorem} \label{th1.1}
Let $n(\geq 2)$ and $k(\geq 1)$ be integers, let $P_{d}(z,f)(\not\equiv 0)$ be a difference polynomial
in $f$ of degree $d\leq n-1$, and let $p_{1}(z),~p_{2}(z)$ be nonzero small meromorphic functions of 
${\rm e}^{z^{k}}$. Suppose $\alpha_{1},~\alpha_{2}$ are distinct nonzero constants, and consider the equation
\begin{equation}\label{eq1.6}
f^{n}(z)f'(z)+P_{d}(z,f)=p_{1}(z){\rm e}^{\alpha_{1}z^{k}}+p_{2}(z){\rm e}^{\alpha_{2}z^{k}}.
\end{equation}
If \eqref{eq1.6} admits a transcendental meromorphic solution $f$ of finite order such
that the coefficients of $P_{d}(z,f)$ are small with respect to $f$, 
then $N(r,f)=S(r,f)$ and one of the following assertions holds:\\
$(1)$ $f(z)=\gamma(z) {\rm e}^{\beta z^{k}}$, where $\gamma(z)$ is a small meromorphic function
with respect to $f(z)$, and $\beta$ is a nonzero constant such that, for some $\{i,j\}=\{1,2\}$,
$(n+1)\beta=\alpha_{i}$, $\gamma^{n}(z)(\gamma'(z)+kz^{k-1}\beta\gamma(z))=p_{i}(z)$.
Moreover, $P_{d}(z,f)\equiv p_{j}(z){\rm e}^{\alpha_{j}z^{k}}$, and
$\frac{\alpha_{i}}{\alpha_{j}}=\frac{n+1}{t}$ for some integer $t$ with $1\leq t\leq d$;\\
$(2)$ $T(r,f)\leq \frac{3}{2}\overline{N}\left(r,\frac{1}{f}\right)+S(r,f)$.
\end{theorem}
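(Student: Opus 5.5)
We first show $N(r,f)=S(r,f)$. Write $F=f^nf'$. Since $f$ has finite order, the difference analogue of the logarithmic derivative lemma together with the standard Clunie/Mohon'ko-type estimates for difference polynomials give $m(r,P_d(z,f))\le d\,m(r,f)+S(r,f)$ and $T(r,P_d)\le d\,T(r,f)+S(r,f)$; here $d\le n-1$ is used, and shifts only contribute $S(r,f)$ because $\sigma(f)<\infty$. Next we count poles. A pole of $f$ of multiplicity $m$ at $z_0$, which is not a pole of any coefficient, is a pole of $f^nf'$ of order $(n+1)m+1$. The right-hand side of \eqref{eq1.6} has only the poles of $p_1,p_2$, which contribute $S(r,f)$. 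Hence such a pole must be cancelled by a pole of $P_d$ of the same order, coming from a shifted pole $z_0+d_\nu$ of $f$. Along a chain $z_0,z_0+c,\dots$ of poles, the relation $(n+1)m_j+1\le d\max_\nu m'_{j}$ forces the multiplicities to grow geometrically by a factor at least $(n+1)/(n-1)>1$. Standard counting along finite-order chains (as in the proofs of Yang--Laine type results) then shows that $n(r,f)$ would grow faster than any finite order permits. Therefore $N(r,f)=S(r,f)$. We also need $T(r,f)$ comparable to $r^k$. Applying the Nevanlinna/Borel argument to the equation, $(n+1)T(r,f)\le T(r,\mathrm{RHS})+dT(r,f)+S(r,f)$, so $T(r,f)=O(r^k)$; conversely, since the right-hand side is not small, $\sigma(f)\ge k$. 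Thus $p_j$ and the coefficients are small with respect to $f$ and to $\mathrm{e}^{z^k}$ simultaneously.

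Next, differentiate to eliminate the exponentials. Put $R=p_1\mathrm{e}^{\alpha_1z^k}+p_2\mathrm{e}^{\alpha_2z^k}$. The function $R$ satisfies a second-order linear equation $R''+AR'+BR=0$ with coefficients $A,B$ built from $p_j$, $p_j'$ and $kz^{k-1}\alpha_j$, all small. Substituting $R=F+P_d$ gives
\[
\Phi:=F''+AF'+BF=-(P_d''+AP_d'+BP_d).
\]
Here $\Phi$ is a differential polynomial in $f$ of the form $f^{n-2}\cdot Q$, with $Q$ of degree $4$ in $f,f',f'',f'''$. The right side has degree at most $d\le n-1$ in shifts and derivatives.

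Case $\Phi\equiv0$: then $F=f^nf'$ is itself a combination $c_1\mathrm{e}^{\alpha_1z^k}\tilde p_1+\dots$ solving the same linear ODE. Together with $N(r,f)=S(r,f)$ and a Borel-type lemma (an identity among $f^{n+1}$-type exponentials), this yields $f=\gamma\mathrm{e}^{\beta z^k}$. Then $f^nf'=\gamma^n(\gamma'+k\beta z^{k-1}\gamma)\mathrm{e}^{(n+1)\beta z^k}$ must equal one exponential term, say the one with index $i$, so that $(n+1)\beta=\alpha_i$. Consequently $P_d\equiv p_j\mathrm{e}^{\alpha_jz^k}$, and since $P_d(z,\gamma\mathrm{e}^{\beta z^k})$ is a sum of terms $\mathrm{e}^{t\beta z^k}$ with $0\le t\le d$, Borel's theorem forces $\alpha_j=t\beta$ with $1\le t\le d$. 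This is exactly assertion (1).

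Case $\Phi\not\equiv0$: we aim for (2). The Clunie-type lemma applied to $f^{n-2}Q=-(\dots)$ of degree at most $n-1$ gives $m(r,Q)$, and even $T(r,Q)$, bounded by $m(r,f)+S(r,f)$ up to constants. The refined form we need is the following. At a zero of $f$ of multiplicity $m\ge2$, the left side $\Phi$ vanishes to order at least $(n+1)m-3$, and comparing with the right-hand side bounds the multiple zeros. Writing $f^{n+1}$ via $F$ and $\Phi$, and using $N(r,f)=S(r,f)$, we get
\[
T(r,f)\le N\left(r,\tfrac1f\right)+S(r,f)\le N_{1)}\left(r,\tfrac1f\right)+N_{(2}\left(r,\tfrac1f\right)+S(r,f),
\]
with $N_{(2}\le\tfrac12$ of something controlled by $\overline N$. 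This gives $T(r,f)\le\frac32\overline N(r,1/f)+S(r,f)$. The main obstacle is precisely this multiplicity bookkeeping: obtaining the constant $\tfrac32$ requires showing that $Q$ (or an auxiliary function like $f'/f-\beta$-type combinations) vanishes at multiple zeros. I would first try the auxiliary function $\psi=\Phi/f^{n-1}$, estimate $T(r,\psi)\le S(r,f)+\overline N(r,1/f)$-type bounds, and then use $N(r,1/f)\le N(r,1/\psi)$ together with $N(r,1/f)-\overline N(r,1/f)\le\frac12N(r,1/f)$.
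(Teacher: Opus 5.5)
Your skeleton is the paper's: growth and pole information as in Lemma~2.1, elimination of the exponentials to $\Phi=F''+AF'+BF=-(P_d''+AP_d'+BP_d)$, and a split on $\Phi\equiv0$. However, the case $\Phi\not\equiv0$, which is where assertion (2) comes from, is not proved, and the steps you propose there fail.

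\begin{itemize}
\item The inequality $T(r,f)\le N(r,1/f)+S(r,f)$ is equivalent to $m(r,1/f)=S(r,f)$. You have no way to get that here; the paper obtains it only in Theorem~1.2, from $P_d(z,0)\not\equiv0$ via Lemma~2.7(ii).
\item The bound $N(r,1/f)-\overline{N}(r,1/f)\le\frac12N(r,1/f)$ is false once zeros of multiplicity $\ge3$ dominate.
\item The bound $N(r,1/f)\le N(r,1/\psi)$ is false for $\psi=\Phi/f^{n-1}=Q/f$. Here $Q=n(n-1)(f')^3+3nff'f''+f^2f'''+A(nf(f')^2+f^2f'')+Bf^2f'$ is a cubic, not a quartic, and at a simple zero $z_0$ of $f$ it gives $Q(z_0)=n(n-1)f'(z_0)^3\neq0$. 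So $\psi$ has a pole at $z_0$, not a zero.
\item Comparing vanishing orders of $\Phi$ at multiple zeros with the right-hand side yields no bound, since the shifts $f(z+d_\nu)$ need not vanish there.
\end{itemize}

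The missing idea is to use Lemma~2.5 with exponent $n-1$ rather than $n-2$. Since $f^{n-1}\psi$ is a differential-difference polynomial of degree $\le d\le n-1$, Lemma~2.5 gives $m(r,\psi)=S(r,f)$; your $f^{n-2}Q$ version only gives $m(r,Q)\le m(r,f)+S(r,f)$. Then, since $N(r,f)=S(r,f)$ and $\psi\not\equiv0$,
\[
2T(r,f)\le m(r,\psi)+m\Big(r,\frac{f^{2}}{\psi}\Big)+S(r,f)\le T\Big(r,\frac{\psi}{f^{2}}\Big)+S(r,f)=N\Big(r,\frac{\psi}{f^{2}}\Big)+S(r,f)\le 3\overline{N}\Big(r,\frac{1}{f}\Big)+S(r,f).
\]
This holds because $\psi/f^2$ is a combination of $(f'/f)^3$, $(f'/f)(f''/f)$, $f'''/f$, $(f'/f)^2$, $f''/f$, $f'/f$ with small coefficients. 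Its proximity function is therefore $S(r,f)$, and apart from the few poles of $f$ and of the coefficients, its poles are zeros of $f$ of order at most $3$. That is where $\frac32$ comes from; no $N_{1)}/N_{(2}$ bookkeeping is needed.

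In the case $\Phi\equiv0$ you also skip the substantive step. From $F=c_1p_1{\rm e}^{\alpha_1z^k}+c_2p_2{\rm e}^{\alpha_2z^k}$ you cannot reach $f=\gamma{\rm e}^{\beta z^k}$ using $N(r,f)=S(r,f)$ and Borel's lemma. Borel's lemma is usable only once the form of $f$ is known, and for that you need $N(r,1/f)=S(r,f)$ and must first exclude $c_1c_2\neq0$. The paper does this in Lemmas~2.2--2.3:
\begin{itemize}
\item If $c_1c_2\neq0$, a zero of $f$ of multiplicity $m$ is a zero of $F$ of multiplicity $(n+1)m-1\ge2$. Counting multiplicities, these are therefore controlled by the zeros of the small function $W=p_1p_2'-p_1'p_2+(\alpha_2-\alpha_1)kz^{k-1}p_1p_2$, so $N(r,1/f)=S(r,f)$.
\item Then $T(r,F/f^{n+1})=T(r,f'/f)=S(r,f)$, so $f^{n+1}=h\,(c_1p_1{\rm e}^{\alpha_1z^k}+c_2p_2{\rm e}^{\alpha_2z^k})$ with $h$ small.
\item The second main theorem for ${\rm e}^{(\alpha_1-\alpha_2)z^k}$ with small targets $0,\infty,-c_2p_2/(c_1p_1)$ then gives a contradiction.
\end{itemize}
Only after, say, $c_2=0$ do zeros and poles of $f$ lie among those of $p_1$. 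Then $f^{n+1}{\rm e}^{-\alpha_1z^k}$ is small, and your Borel argument from that point on is fine.

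Finally, your pole-chain sketch for $N(r,f)=S(r,f)$ does not work as stated with meromorphic coefficients. A chain can terminate at a pole of a coefficient or of $p_j$, and there may be infinitely many of these. The paper instead invokes Lemma~2.1.
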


\begin{remark}\label{re1.1}
In Theorem \ref{th1.1}, if $P_{d}(z,f)$ is a differential polynomial, then the conclusion
$N(r,f)=S(r,f)$ need not hold in general. For example, the differential equation
$f^{2}(z)f'(z)-13f'(z)-12f''(z)-6f'''(z)={\rm e}^{3z}-25{\rm e}^{z}$
admits a meromorphic solution $f(z)={\rm e}^{z}+\frac{6}{{\rm e}^{z}-1}$ of order one,
for which $N(r,f)=\frac{1}{2}T(r,f)+S(r,f)$.
\end{remark}

The following Examples \ref{ex1.1}-\ref{ex1.3} show that the forms of
meromorphic solutions described in Theorem \ref{th1.1} do occur.\par

\begin{example}\label{ex1.1}
The entire function $f(z)={\rm e}^{z+z^{2}}$ of order two solves the equation
$f^{2}(z)f'(z)+f(z+i\pi)=(1+2z){\rm e}^{3z}\cdot {\rm e}^{3z^{2}}-{\rm e}^{(1+2\pi i)z-\pi^{2}}\cdot {\rm e}^{z^{2}}$.
Here $n=2$, $P_{1}(z,f)\equiv p_{2}(z){\rm e}^{\alpha_{2}z^{2}}$, $\frac{\alpha_{1}}{\alpha_{2}}=\frac{n+1}{t}$
with $t=1=d$. Hence assertion (1) of Theorem \ref{th1.1} is attainable.
\end{example}

\begin{example}\label{ex1.2}
The entire function $f(z)={\rm e}^{z}+{\rm e}^{z^{2}}$ of order two solves the equation
$f^{3}(z)f'(z)-3(1+2z){\rm e}^{2z}f^{2}(z)+\frac{(1+4z){\rm e}^{3z}}{{\rm e}^{2\pi iz-\pi^{2}}+1}
\cdot f(z+i\pi)+\Big[\frac{(1+4z){\rm e}^{3z}}{{\rm e}^{2\pi iz-\pi^{2}}+1}+2(1+3z){\rm e}^{3z}\Big]f(z)
=(1+6z){\rm e}^{z}\cdot {\rm e}^{3z^{2}}+2z{\rm e}^{4z^{2}}$.
Clearly, $T(r,f)=\overline{N}\big(r,\frac{1}{f}\big)+S(r,f)$, which satisfies assertion (2)
of Theorem \ref{th1.1}.
Here $P_{d}(z,0)\equiv 0$, $n=3$, $d=n-1=2$, $k=2$ and $\alpha_{1}=4$, $\alpha_{2}=3$ satisfy
$\frac{\alpha_{1}}{\alpha_{2}}=\frac{4}{3}$.
\end{example}

\begin{example}\label{ex1.3}
The entire function $f(z)={\rm e}^{z^{2}}+{\rm e}^{-z^{2}}$ of order two solves the equation
$f^{3}(z)f'(z)-\frac{8z{\rm e}^{2(2\pi iz-\pi^{2})}}{{\rm e}^{4(2\pi iz-\pi^{2})}-1}f^{2}(z+i\pi)
+\frac{4z(1+{\rm e}^{4(2i\pi z-\pi^{2})})}{{\rm e}^{4(2i\pi z-\pi^{2})}-1}f^{2}(z)
-8z\frac{{\rm e}^{2(2i\pi z-\pi^{2})}-1}{{\rm e}^{2(2i\pi z-\pi^{2})}+1}
=2z{\rm e}^{4z^{2}}-2z{\rm e}^{-4z^{2}}$.
Clearly, $T(r,f)=\overline{N}\big(r,\frac{1}{f}\big)+S(r,f)$, which satisfies assertion (2)
of Theorem \ref{th1.1}.
Here $P_{d}(z,0)\not\equiv 0$, $n=3$, $k=2$, $d=n-1=2$ and $\alpha_{1}=4$, $\alpha_{2}=-4$ satisfy
$\frac{\alpha_{1}}{\alpha_{2}}=-1$.
\end{example}

Examples \ref{ex1.1}--\ref{ex1.3} show that Eq.\eqref{eq1.6} does admit finite
order meromorphic solutions for suitable choices of $\alpha_{1}$ and $\alpha_{2}$.
This naturally leads to the following question: under what
restrictions on $\alpha_{1}$ and $\alpha_{2}$ does Eq.\eqref{eq1.6} admit no
finite order meromorphic solution? We answer this question and obtain the
following two theorems.\par

\begin{theorem} \label{th1.2}
Let $n(\geq 2)$ and $k(\geq 1)$ be integers, and $P_{d}(z,f)$ be a difference polynomial
in $f$ of degree $d\leq n-1$ whose coefficients are meromorphic functions small with
respect to $\mathrm{e}^{z^{k}}$, and such that $P_{d}(z,0)\not\equiv 0$.
Let $p_{1}(z),~p_{2}(z)$ be nonzero small meromorphic functions of ${\rm e}^{z^{k}}$,
and let $\alpha_{1},~\alpha_{2}$ be nonzero constants such that
\begin{equation}\label{eq1.7}
\frac{\alpha_{1}}{\alpha_{2}}\in\Big\{\frac{t}{n+1},\frac{n+1}{t}:1\leq t\leq n-1\Big\}.
\end{equation}
Then Eq.\eqref{eq1.6} admits no meromorphic solution of finite order.
\end{theorem}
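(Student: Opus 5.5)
Suppose, to the contrary, that $f$ is a meromorphic solution of \eqref{eq1.6} of finite order. Since the right-hand side of \eqref{eq1.6} is transcendental and $P_d(z,0)\not\equiv 0$, $f$ cannot be rational or small with respect to ${\rm e}^{z^k}$. Indeed, if it were, the left-hand side would be small with respect to ${\rm e}^{z^k}$, while the right-hand side has characteristic $\asymp r^{k}$ by the Borel-type lemma, since $\alpha_1\neq\alpha_2$ are nonzero. So $f$ is transcendental. The coefficients of $P_d$ and $p_1,p_2$ are small with respect to ${\rm e}^{z^k}$. I will first show $T(r,f)\asymp r^{k}$, so that these functions are also small with respect to $f$. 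The lower bound follows from comparing characteristics in \eqref{eq1.6}: we have $(n+1)T(r,f)\ge T(r,\text{RHS})-dT(r,f)+S(r,f)$, which gives $T(r,{\rm e}^{z^k})=O(T(r,f))$. The order statement in the abstract then lets us apply Theorem \ref{th1.1}. Hence $N(r,f)=S(r,f)$, and either alternative (1) or alternative (2) holds.

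\emph{Alternative (1).} Here $P_d(z,f)\equiv p_j{\rm e}^{\alpha_j z^k}$ with $f=\gamma{\rm e}^{\beta z^k}$. I will substitute $f$ into $P_d$. Each shift $f(z+c)$ equals $\gamma(z+c){\rm e}^{\beta(z+c)^k}={\rm e}^{\beta z^k}\cdot(\text{small})$, because $(z+c)^k-z^k$ has degree $k-1$. So $P_d(z,f)=\sum_{m=0}^{d}b_m(z){\rm e}^{m\beta z^k}$ with small $b_m$, and $b_0=P_d(z,0)\not\equiv 0$. The identity $\sum_m b_m{\rm e}^{m\beta z^k}-p_j{\rm e}^{\alpha_j z^k}\equiv 0$ involves exponents $m\beta$ and $\alpha_j$. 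By Borel's theorem (or its Nevanlinna version for small coefficients), the term $b_0$ with exponent $0$ can only cancel against terms with the same exponent. Since $\alpha_j\neq0$ and $m\beta\neq0$ for $m\ge1$, this forces $b_0\equiv0$, a contradiction. So alternative (1) is impossible.

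\emph{Alternative (2).} We have $T(r,f)\le\frac32\overline N(r,1/f)+S(r,f)$, so $f$ has many zeros; this is the main work. Assume $\alpha_1/\alpha_2=(n+1)/t$ with $1\le t\le n-1$; the case $t/(n+1)$ is symmetric. Set $\alpha_1=(n+1)\beta$ and $\alpha_2=t\beta$. I will differentiate \eqref{eq1.6} to eliminate one exponential. Write $F=f^nf'+P_d$. Then
\[
F'-\alpha_2^{*}F=(p_1'+\cdots-\alpha_2^{*}p_1){\rm e}^{\alpha_1z^k},
\]
where $\alpha_2^{*}=p_2'/p_2+k\alpha_2z^{k-1}$. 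This gives an expression $\phi:=F'-\alpha_2^{*}F=h\,{\rm e}^{\alpha_1 z^k}$ with $h$ small. If $h\equiv0$, then $F=cp_2{\rm e}^{\alpha_2z^k}$, which contradicts $p_1\ne0$ by Borel. Hence ${\rm e}^{\alpha_1 z^k}=\phi/h$, and eliminating the other exponential the same way produces the auxiliary identities.

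At a zero $z_0$ of $f$ that is not a zero or pole of the coefficients, $F(z_0)=P_d(z_0,f)$. Using $N(r,f)=S(r,f)$ and the growth estimates, I aim to show that the function
\[
\psi:=\frac{(f^nf')'-\alpha_2^{*}f^nf'}{\ldots}
\]
is controlled. More concretely, I will introduce $g=f^{n+1}{\rm e}^{-\alpha_1z^k}$. The relation $\alpha_1=(n+1)\beta$ suggests that $f\approx \gamma {\rm e}^{\beta z^k}$ dominates, and $P_d(z,0)\not\equiv0$ produces a term $b_0$ which must be matched. The plan is to show that $N(r,1/f)$ is small. At zeros of $f$ of multiplicity $\ge2$, $f^nf'$ and its derivative vanish, so $b_0$-type terms must vanish there, giving a small counting function. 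For simple zeros I would build an auxiliary small function, for instance
\[
\Phi=\frac{\phi'-\alpha_1^{*}\phi}{f^{n-1}},
\]
which vanishes identically only in degenerate cases. The inequality $\overline N(r,1/f)\le N(r,1/\Phi)\le T(r,\Phi)$ then yields, together with the estimate from (2), a bound like $T(r,f)\le \frac32\cdot\frac{t+\cdots}{n+1}T(r,f)$. The restriction $t\le n-1$ makes the coefficient $<1$, which is a contradiction. The hardest step is this estimate: choosing the right auxiliary function whose zero set contains the zeros of $f$, and checking that $P_d(z,0)\not\equiv0$ rules out the case where it vanishes identically. If the direct route fails, I would first try proving $f$ is exactly of the form $\gamma{\rm e}^{\beta z^k}+\delta$ and then reduce to the Borel argument used for alternative (1).
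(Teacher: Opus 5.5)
\textbf{There is a genuine gap: alternative (2) of Theorem~\ref{th1.1} is never ruled out.}

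Your reduction to Theorem~\ref{th1.1} is fine: the lower bound $T(r,{\rm e}^{z^k})=O(T(r,f))$ makes all coefficients small with respect to $f$. Your treatment of alternative (1) is also fine, and it is essentially the paper's Case~1. After substitution, $b_0=P_d(z,0)\not\equiv0$ is the only term with exponent $0$, so Borel's lemma kills it.

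Alternative (2) carries the whole theorem. The inequality $T(r,f)\le\frac32\overline N(r,1/f)+S(r,f)$ is not contradictory by itself, and nothing you propose improves it.
\begin{itemize}
\item At a multiple zero $z_0$ of $f$, the equation only gives $P_d(z_0,f)=p_1{\rm e}^{\alpha_1z_0^k}+p_2{\rm e}^{\alpha_2z_0^k}$. Since $P_d$ involves the shifts $f(z_0+d_\nu)$, nothing ``$b_0$-type'' has to vanish there.
\item Your $\Phi$ is identically zero if $\alpha_1^{*}$ is the logarithmic derivative of $\phi=h{\rm e}^{\alpha_1z^k}$. Otherwise it is a small multiple of ${\rm e}^{\alpha_1z^k}/f^{n-1}$, which has poles, not zeros, at the zeros of $f$.
\item If you keep only the $f^nf'$-part, you get the function $\psi(z,f)$ of \eqref{eq3.15}. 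It has a pole at each simple zero of $f$ (from the term $(f')^3/f$), and it merely reproduces the constant $\frac32$.
\item The closing count cannot work either. Even without the extra terms, $\frac32\cdot\frac{t}{n+1}\ge1$ for $t=n-1$ as soon as $n\ge5$.
\end{itemize}

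\textbf{What is missing.} First, $P_d(z,0)\not\equiv0$ must be used analytically, not only in the Borel step. Write $P_d(z,f)=\sum_{j=0}^d\beta_jf^j$ with $m(r,\beta_j)=S(r,f)$ (Lemma~\ref{le2.6}), and divide \eqref{eq1.6} by $f^{n+1}$ as in \eqref{eq4.5}. Lemma~\ref{le2.7}(ii) then gives $m(r,1/f)=S(r,f)$ and $m(r,{\rm e}^{\alpha_1z^k}/f^{n+1})=S(r,f)$.

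Second, the condition $t\le n-1$ enters through
\[
\Big|\frac{{\rm e}^{\alpha_2z^k}}{f^{n-1}}\Big|=\Big|\frac{{\rm e}^{\alpha_1z^k}}{f^{n+1}}\Big|^{\frac{t}{n+1}}\frac{1}{|f|^{n-1-t}},
\]
which yields $m(r,{\rm e}^{\alpha_2z^k}/f^{n-1})=S(r,f)$. So you must eliminate ${\rm e}^{\alpha_1z^k}$ and keep the smaller exponential, which is the opposite of your $\phi$.

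Lemma~\ref{le2.5} applied to \eqref{eq4.2} then shows that
\[
\psi=np_1(f')^2+p_1ff''-(p_1'+\alpha_1kz^{k-1}p_1)ff'
\]
is small.
\begin{itemize}
\item If $\psi\equiv0$, then $f^nf'=cp_1{\rm e}^{\alpha_1z^k}$, and your Borel argument finishes.
\item If $\psi\not\equiv0$, it vanishes at every multiple zero of $f$, so $T(r,f)=N_{1)}(r,1/f)+S(r,f)$.
\end{itemize}
In the second case a further analysis at simple zeros is still needed. One derives $f''=Af'+Bf$ with small $A,B$ and reduces to $af^2+bff'+c(f')^2=\psi$. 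Lemma~\ref{le2.8} then forces ${\rm e}^{2\alpha_1z^k}$ or ${\rm e}^{2n\alpha_1z^k}$ to be small, a contradiction. None of this appears in your proposal, even in outline.
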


\begin{remark}\label{re1.2}
Neither of the two assumptions in Theorem \ref{th1.2} can be omitted:
Example \ref{ex1.1} satisfies \eqref{eq1.7} but $P_{d}(z,0)\equiv 0$, and
Example \ref{ex1.3} satisfies $P_{d}(z,0)\not\equiv 0$ but
$\frac{\alpha_{1}}{\alpha_{2}}\notin\{\frac{t}{n+1},\frac{n+1}{t}:1\leq t\leq n-1\}$;
in both cases Eq.\eqref{eq1.6} does have a finite order meromorphic solution.
\end{remark}

\begin{remark}\label{re1.3}
Theorems \ref{th1.1} and \ref{th1.2} remain valid if Eq.\eqref{eq1.6} is replaced by
\begin{equation}\label{eq1.8}
f^{n}(z)f'(z)+P_{d}(z,f)=p_{1}(z){\rm e}^{\alpha_{1}(z)}+p_{2}(z){\rm e}^{\alpha_{2}(z)},
\end{equation}
where $\alpha_{1}(z)=a_0z^{k}+a_1z^{k-1}+\cdots+a_{k-1}z+a_k$,
$\alpha_{2}(z)=b_0z^{k}+b_1z^{k-1}+\cdots+b_{k-1}z+b_k$, $a_0,~b_0$ are nonzero constants
with $a_0\neq b_0$. Indeed, writing $p_{1}(z){\rm e}^{\alpha_{1}(z)}+p_{2}(z){\rm e}^{\alpha_{2}(z)}
=p_{1}(z){\rm e}^{\alpha_{1}(z)-a_0z^{k}}{\rm e}^{a_0z^{k}}+
p_{2}(z){\rm e}^{\alpha_{2}(z)-b_0z^{k}}{\rm e}^{b_0z^{k}}
:=q_{1}(z){\rm e}^{a_0z^{k}}+q_{2}(z){\rm e}^{b_0z^{k}}$,
we see that $q_{1},q_{2}$ are nonzero meromorphic functions. Since
$T(r,p_j)=o(r^{k})$, $\deg(\alpha_{1}(z)-a_0z^{k})\leq k-1$ and
$\deg(\alpha_{2}(z)-b_0z^{k})\leq k-1$, it follows that $T(r,q_j)=o(r^{k})$,
$j=1,2$.
\end{remark}

\begin{theorem} \label{th1.3}
Let $n(\geq 3)$  be an integer, let $P_{d}(z,f)$ be a difference polynomial
in $f$ of degree $d\leq n-2$ whose coefficients are polynomials, and let 
$\lambda,~p_{j}~(j=1,2)$ be nonzero constants. Then the equation
\begin{equation}\label{eq1.9}
f^{n}(z)f'(z)+P_{d}(z,f)=p_{1}{\rm e}^{\lambda z}+p_{2}{\rm e}^{-\lambda z}
\end{equation}
admits no meromorphic solution of finite order..
\end{theorem}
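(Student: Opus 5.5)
We use Theorem \ref{th1.1} with $k=1$, $\alpha_1=\lambda$, $\alpha_2=-\lambda$ and $p_1,p_2$ constant. Suppose $f$ is a meromorphic solution of \eqref{eq1.9} of finite order. First, $f$ is transcendental. If $f$ were rational, the left-hand side of \eqref{eq1.9} would be rational. The right-hand side $p_1{\rm e}^{\lambda z}+p_2{\rm e}^{-\lambda z}$ is transcendental, since $p_1p_2\neq 0$. The coefficients of $P_d$ are polynomials, so they are small with respect to $f$. Theorem \ref{th1.1} therefore applies, giving $N(r,f)=S(r,f)$ and either assertion (1) or assertion (2).

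\textbf{Excluding assertion (1).} Assertion (1) requires $\frac{\alpha_i}{\alpha_j}=\frac{n+1}{t}$ with $1\le t\le d$. Here $\frac{\alpha_i}{\alpha_j}=-1$, while $\frac{n+1}{t}>0$. This is a contradiction, so assertion (2) must hold:
$$T(r,f)\le \tfrac32\overline{N}\left(r,\tfrac1f\right)+S(r,f).$$
In particular $f$ has many zeros.

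\textbf{Excluding assertion (2).} This is the main step. Differentiate \eqref{eq1.9} to eliminate the exponentials. Write $F=f^nf'+P_d(z,f)$. Then $F''=\lambda^2F$, because the right-hand side $h$ satisfies $h''=\lambda^2h$. Consequently
$$f^n\Phi+\big(P_d''-\lambda^2P_d\big)=0,\qquad \Phi=n(n-1)f^{-2}f'^3+3nf^{-1}f'f''+f'''-\lambda^2f'.$$
More precisely, expanding $(f^nf')''-\lambda^2 f^nf'$ gives $f^{n-2}Q(f)$, where
$$Q(f)=n(n-1)f'^3+3nff'f''+f^2f'''-\lambda^2f^2f'.$$
So
$$f^{n-2}Q(f)=-(P_d''-\lambda^2P_d).$$
The right-hand side is a differential-difference polynomial of degree $\le d\le n-2$.

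Now compare the two sides. Using $N(r,f)=S(r,f)$ and the difference logarithmic derivative lemma, the Clunie-type lemma applied to this relation gives
$$m(r,Q(f))=S(r,f),\qquad N(r,Q(f))=S(r,f),$$
whenever the right-hand side has degree $\le n-2$. The Clunie-type lemma may need the variant with total degree of the left-hand side $n+1$ against degree $\le n-2$ on the right. Hence $T(r,Q)=S(r,f)$.

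\textbf{Evaluating at zeros of $f$.} At a zero $z_0$ of $f$ of multiplicity $m$ that is not a zero or pole of a coefficient, $Q$ has a zero of order $3(m-1)$ when $m\ge1$. Leading terms must be checked: $n(n-1)f'^3$ vanishes to order $3m-3$ with coefficient $m^3(n(n-1)+3n(m-1)/m\cdot\ldots)$.

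\textbf{Case $Q\not\equiv 0$.} If $Q\not\equiv0$, simple zeros of $f$ are not zeros of $Q$, since $Q(z_0)=n(n-1)f'(z_0)^3\neq0$. Thus simple zeros are not controlled by $Q$ directly. Instead we proceed as in the proof of Theorem \ref{th1.1}. Write
$$h=p_1{\rm e}^{\lambda z}+p_2{\rm e}^{-\lambda z}.$$
At zeros of $f$ with $n\ge3$, the equation gives $h(z_0)=P_d(z_0,f)$, and $h'$ similarly. Then consider
$$\psi=\frac{h'^2-\lambda^2h^2}{\cdot},$$
noting the key identity $h'^2-\lambda^2h^2=-4\lambda^2p_1p_2$, which is a nonzero constant. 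Since $f^n$ and $f^{n-1}$ vanish at zeros of $f$ for $n\ge3$, substituting the equation yields
$$\big(f^nf'+P_d\big)'^2-\lambda^2\big(f^nf'+P_d\big)^2=-4\lambda^2p_1p_2.$$
Expanding, this reads
$$f^{n-1}G=-4\lambda^2p_1p_2-\big(P_d'^2-\lambda^2P_d^2\big),$$
where $G$ is a differential-difference polynomial. The right-hand side has degree $\le 2d\le 2n-4$. The left-hand side has $f^{2n}f'^2$ as its dominant term, of degree $2n+2$.

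\textbf{Finishing.} Apply the Clunie-type lemma or Valiron–Mohon'ko to
$$f^{2n}\big(f'^2\big)'\ldots=\text{poly of degree }\le 2n-2.$$
From
$$f^{2n-2}\Big[\big(nf'^2+ff''\big)^2-\lambda^2f^2f'^2\Big]=R(z,f),\qquad \deg R\le 2n-2,$$
we get $T(r,W)=S(r,f)$ with $W=(nf'^2+ff'')^2-\lambda^2f^2f'^2$. If $W\not\equiv0$, then at a simple zero $z_0$ of $f$ outside an exceptional set, $W(z_0)=n^2f'(z_0)^4\neq0$. Meanwhile the identity with $R$ shows $R(z_0,f)=0$ whenever $n\ge3$, because then $f^{2n-2}$ vanishes there. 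Comparing $W$ against $R/f^{2n-2}$ and counting zeros yields $N(r,1/f)=S(r,f)$. This contradicts assertion (2).

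\textbf{Case $W\equiv0$.} Then $nf'^2+ff''=\pm\lambda ff'$. Integrating this Riccati/Bernoulli-type equation gives $f^{n+1}$ of the form $C_1{\rm e}^{\pm\lambda z}+C_2$. Equivalently $(f^{n+1})'=c{\rm e}^{\pm\lambda z}$, so
$$f^{n+1}=\tfrac{c}{\pm\lambda}{\rm e}^{\pm\lambda z}+C.$$
For $C\neq0$ this has simple zeros and cannot be an $(n+1)$-th power. Hence $C=0$ and $f=a{\rm e}^{\pm\lambda z/(n+1)}$, which has no zeros. This again contradicts assertion (2).

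I expect the main obstacle to be justifying the Clunie-type step rigorously for mixed differential-difference polynomials in this setting. A second difficulty is handling zeros of $f$ at which the coefficients or shifts misbehave; these must be placed in an exceptional set whose counting function is $S(r,f)$, using $N(r,f)=S(r,f)$ and the finite order of $f$.
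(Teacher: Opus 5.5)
Your set-up is sound up to the Clunie step for $f^{n-2}Q(f)=\lambda^{2}P_d-P_d''$. The right side has degree $\le d\le n-2$, so $T(r,Q)=S(r,f)$. When $Q\not\equiv0$ this kills $m(r,1/f)$ and the multiple zeros of $f$, leaving $T(r,f)=N_{1)}(r,1/f)+S(r,f)$. But that is exactly the situation assertion (2) allows, and your argument for excluding it does not work, for two reasons.

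First, the degree count in the ``Finishing'' step is wrong. Expanding $(F')^{2}-\lambda^{2}F^{2}$ with $F=f^{n}f'+P_d$ produces the cross terms $2f^{n-1}(nf'^{2}+ff'')P_d'-2\lambda^{2}f^{n}f'P_d$. These have degree $n+1+d$, which equals $2n-1$ when $d=n-2$. So in $f^{2n-2}W=R$ you only have $\deg R\le 2n-1$, and the Clunie lemma gives nothing for $W$. The variant $f^{n-1}G=-4\lambda^{2}p_1p_2-(P_d'^{2}-\lambda^{2}P_d^{2})$ would need $2d\le n-1$, which fails e.g.\ for $n=4$, $d=2$.

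Second, even granting $T(r,W)=S(r,f)$, the sentence ``comparing $W$ against $R/f^{2n-2}$ and counting zeros yields $N(r,1/f)=S(r,f)$'' is not an argument. Indeed $R/f^{2n-2}\equiv W$, and a small function that is \emph{nonzero} at the simple zeros of $f$ puts no bound on how many there are. So a solution whose zeros are essentially all simple is never ruled out.

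The missing idea is to use the derivative of the small function at simple zeros; the paper does this with your $Q$ (its $\varphi$). At a simple zero, $Q=n(n-1)f'^{3}$ and $Q'=3n^{2}f'^{2}f''$, so $\zeta=[(n-1)Q'f'-3nQf'']/f$ is small. This yields a linear equation $f''=A_1f'+B_1f$ with small coefficients $A_1=\frac{(n-1)Q'}{3nQ}$ and $B_1=-\frac{\zeta}{3nQ}$. Substituting it into $Q$ and $Q'$ turns both into cubic forms in $f,f'$. Because $T(r,f)=N_{1)}(r,1/f)+S(r,f)$, all coefficients of $Q'-\frac{Q'}{Q}Q$ must then vanish identically. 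The paper compares two of these identities as $z\to\infty$ and gets incompatible limits for $B_1$, namely $\frac{2\lambda^{2}}{3n^{2}+3n+2}$ versus $\frac{\lambda^{2}}{3n+1}$, forcing $n(n-1)=0$. Your identity $h'^{2}-\lambda^{2}h^{2}=-4\lambda^{2}p_1p_2$ and your treatment of $W\equiv0$ are correct, but they do not replace this step.

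There are also two smaller omissions:
\begin{itemize}
\item Theorem \ref{th1.1} assumes $P_d\not\equiv0$, so the case $P_d\equiv0$ must be dismissed separately by Lemma \ref{le2.3}.
\item You never treat $Q\equiv0$. This is easy with your reduction: then $f^{n}f'=a{\rm e}^{\lambda z}+b{\rm e}^{-\lambda z}$, Lemma \ref{le2.3} forces $ab=0$, so $f$ is zero-free, contradicting assertion (2).
\end{itemize}
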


\begin{remark}\label{re1.4}
The restrictions $n\geq 3$ and $d\leq n-2$ in Theorem \ref{th1.3} are sharp,
as illustrated by Examples \ref{ex1.4} and \ref{ex1.5} below.
\end{remark}

\begin{example}\label{ex1.4}
The entire function $f(z)={\rm e}^{z}+{\rm e}^{-z}$ solves the equation
$f^{2}(z)f'(z)-if(z-i\frac{\pi}{2})={\rm e}^{3z}-{\rm e}^{-3z}$.
Here $n=2$, $d=n-1=1$ and $P_{d}(z,0)\equiv 0$.
One readily sees that $T(r,f)=\overline{N}\big(r,\frac{1}{f}\big)+S(r,f)$.
\end{example}

\begin{example}\label{ex1.5}
The entire function $f(z)={\rm e}^{z}+{\rm e}^{-z}$ solves the equation
$f^{3}(z)f'(z)+2if(z+i\frac{\pi}{2})f(z)={\rm e}^{4z}-{\rm e}^{-4z}$.
Here $n=3$, $d=n-1=2$ and $P_{d}(z,0)\equiv 0$.
One readily sees that $T(r,f)=\overline{N}\big(r,\frac{1}{f}\big)+S(r,f)$.
\end{example}

\section{Some Lemmas}
\begin{lemma}(\cite[Lemma~2.2]{CM2})\label{le2.1}
Let $f$ be a transcendental meromorphic solution of the differential-difference equation
\begin{equation}\label{eq2.1}
f^n(z)f'(z)+P_d(z,f)=H(z),
\end{equation}
where $n(\geq 2)$ is an integer, $P_d(z,f)$ is a difference polynomial in $f$ of degree
$d\leq n-1$, and $H(z)$ is meromorphic function satisfying $N(r,H)=S(r,f)$. If
$\sigma(f)<+\infty$, then $N(r,f)=S(r,f)$, and $\sigma(f)=\sigma(H)$.
\end{lemma}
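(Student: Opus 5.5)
The plan is to prove the two assertions of Lemma~\ref{le2.1} separately: first $N(r,f)=S(r,f)$ by counting poles along chains of shifts, then $\sigma(f)=\sigma(H)$ by Clunie-type proximity estimates.

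\emph{Step 1: poles of $f$ are few.} Let $E$ be the set of poles of $H$ and of the coefficients $a_\mu$ of $P_d$. By hypothesis and smallness of the coefficients, the counting function of $E$ is $S(r,f)$. Since $\sigma(f)<\infty$, I will use the difference analogue $N(r,f(z+c))=N(r,f)+S(r,f)$ and its translates freely. Take a pole $z_0\notin E$ of $f$ with multiplicity $m_0$.

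\begin{itemize}
\item The left-hand term $f^nf'$ has a pole of order $(n+1)m_0+1$ at $z_0$.
\item This pole must be cancelled, since $H$ is regular at $z_0$. Hence some monomial of $P_d$ has a pole of order at least $(n+1)m_0+1$ there.
\item Because $|\mu|\le d\le n-1$, some shift $f(z_0+d_{\nu})$, $\nu\ge1$, has a pole of multiplicity
\[
m_1\ge \frac{(n+1)m_0+1}{d} \ge q\,m_0, \qquad q=\frac{n+1}{n-1}>1 .
\]
If instead $\nu=0$ were responsible, then $f$ itself would have multiplicity $m_0$ there, which is too small.
\item Iterating gives points $z_j=z_0+c_1+\dots+c_j$ with $c_i\in\{d_1,\dots,d_s\}$, $|z_j|\le|z_0|+jD$ where $D=\max|d_\nu|$, and multiplicities $m_j\ge q^j$.
\end{itemize}

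If the chain never meets $E$, then $n(|z_0|+jD,f)\ge q^j$ for all $j$. This forces exponential growth of $n(r,f)$ in $r$, contradicting $\sigma(f)<\infty$.

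So every chain must enter $E$. The main obstacle is quantifying this entry into $E$ so as to conclude $N(r,f)=S(r,f)$. My first attempt is the following. Since the multiplicities along a chain grow geometrically, reaching multiplicity $M$ costs only $O(\log M)$ steps. Consequently a chain starting at $z_0$ must hit $E$ within distance $O(D\log(\text{multiplicity}))$. I will then charge each pole of $f$ off $E$ to a point of $E$ within a bounded radius shift, controlling multiplicities by the pole orders of the coefficients at that point. This should give
\[
N(r,f)\le C\,N(r+C',E)+S(r,f)=S(r,f),
\]
where I again use finite order, via $N(r+C',\cdot)\le N(r,\cdot)+S$, to absorb the radius shift.

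\emph{Step 2: $\sigma(f)=\sigma(H)$.} The inequality $\sigma(H)\le\sigma(f)$ is immediate. Indeed, $H=f^nf'+P_d$ gives
\[
T(r,H)\le (n+1)T(r,f)+T(r,f')+\sum_\nu O\bigl(T(r,f(z+d_\nu))\bigr)+S(r,f),
\]
and each term on the right is $O(T(r,f))$ for finite order $f$.

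For the reverse inequality, write $f^{n+1}(f'/f)=H-P_d$. On $\{|f|\ge1\}$, each monomial of $P_d$ is bounded by $|f|^{d}$ times products of shift quotients $f(z+d_\nu)/f(z)$. The difference logarithmic derivative lemma and the usual logarithmic derivative lemma then yield a Clunie-type bound
\[
(n+1-d)\,m(r,f)\le m(r,H)+m\!\left(r,\tfrac{f}{f'}\right)+S(r,f).
\]
To avoid the term $m(r,f/f')$, I will work with $f^{n+1}$, whose derivative is $(n+1)(H-P_d)$. I will estimate $m(r,f^{n+1})$ through $m(r,H)$ and $d\,m(r,f)$ by the same splitting. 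Combining this with Step 1, that is $T(r,f)=m(r,f)+S(r,f)$, and with $n+1-d\ge2$, I expect to obtain
\[
T(r,f)\le C\,T(r,H)+S(r,f)
\]
outside a set of finite logarithmic measure. Standard removal of the exceptional set then gives $\sigma(f)\le\sigma(H)$, hence $\sigma(f)=\sigma(H)$.
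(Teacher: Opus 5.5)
The paper gives no proof of this lemma; it is quoted from \cite{CM2}. The nearest argument in the paper is the pole chain at the start of the proof of Theorem~\ref{th1.3}, where the coefficients are polynomials and the right-hand side is entire, so your set $E$ is empty and the chain never stops. Your Step~1 reproduces that mechanism correctly up to the point you yourself flag, and there the proposal has a genuine gap.

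Reaching $E$ does not stop a chain. At the first point $e\in E$, the pole of order $(n+1)m+1$ of $f^nf'$ can still be balanced by a shift $f(e+d_\nu)$. For instance, if some $a_\mu$ has a simple pole at $e$, a shift of multiplicity $\ge qm$ suffices; and $e$ may be a pole of a coefficient that plays no role in the balancing at all. So the multiplicity of $f$ at $e$ is \emph{not} controlled by the pole orders of $H$ and the $a_\mu$ at $e$. Smallness of the counting function of $E$ says nothing about the multiplicities of $f$ on $E$, so charging $m_0\le q^{-j}m_j$ to $e$ proves nothing. Two further problems:
\begin{itemize}
\item The charging is many-to-one: up to $\binom{j+s-1}{s-1}$ starting points reach the same $e$ in $j$ steps, and you do not count this.
\item The chain length forced by finite order is $O(\log r)$ (from $q^j\le m_j\le n(r+jD,f)$), not bounded. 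So the absorption $N(r+C',\cdot)\le N(r,\cdot)+S(r,f)$ does not apply as stated.
\end{itemize}

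What is needed is a local inequality valid at \emph{every} point, including points of $E$. Let $m(w)$ be the pole multiplicity of $f$ at $w$ ($0$ if none), and let $A(w)$ be the maximal pole order of $H$ and the $a_\mu$ at $w$. Comparing pole orders in \eqref{eq2.1} and using $d\le n-1$ gives
\[
m(w)\le \frac1q\max_{1\le\nu\le s}m(w+d_\nu)+\frac12A(w),\qquad q=\frac{n+1}{n-1}.
\]
Iterate this $J$ times and sum over $|w|\le r$; each point is reached from at most $\binom{J+s-1}{s-1}$ starting points. With $C_J=\frac12\sum_{i<J}q^{-i}\binom{i+s-1}{s-1}$ this yields
\[
n(r,f)\le C_J\Bigl(n(r+JD,H)+\sum_{\mu}n(r+JD,a_\mu)\Bigr)+q^{-J}\binom{J+s-1}{s-1}n(r+JD,f).
\]
Now integrate, use $T(r+JD,f)=(1+o(1))T(r,f)$ outside a set of finite logarithmic measure, and let $J\to\infty$ (note $q^{-J}\binom{J+s-1}{s-1}\to0$). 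This gives $N(r,f)=S(r,f)$.

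Step~2 is fine once Step~1 is. The detour through $f^{n+1}$ is unnecessary and, as described, avoids nothing: recovering $m(r,g)$ from $g'$ reintroduces $m(r,g/g')$, and $g'/g=(n+1)f'/f$. Your first Clunie inequality already closes the argument:
\[
m(r,f/f')\le T(r,f'/f)+O(1)=\overline N(r,f)+\overline N(r,1/f)+S(r,f)\le T(r,f)+S(r,f).
\]
With $T(r,f)=m(r,f)+S(r,f)$ you get $(n-d)T(r,f)\le m(r,H)+S(r,f)$, and $n-d\ge1$ gives $\sigma(f)\le\sigma(H)$.
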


\begin{lemma}\label{le2.2}
Let $n(\geq 2)$ and $k(\geq 1)$ be integers, $\alpha_{1},\alpha_{2}$
be distinct nonzero constants, and let $p_{1}(z),~p_{2}(z)$ be nonzero meromorphic functions.
Then the equation
\begin{equation}\label{eq2.2}
f^{n}(z)=p_{1}(z){\rm e}^{\alpha_{1}z^{k}}+p_{2}(z){\rm e}^{\alpha_{2}z^{k}}
\end{equation}
admits no meromorphic solution $f(z)$ satisfying $T(r,p_{j})=S(r,f)~(j=1,2)$.
\end{lemma}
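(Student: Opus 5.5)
We argue by contradiction: suppose $f$ is a meromorphic solution of \eqref{eq2.2} with $T(r,p_j)=S(r,f)$, $j=1,2$. Then $f$ is transcendental. If $f$ were rational, the right-hand side would be rational, so the transcendental function $p_1{\rm e}^{\alpha_1z^k}+p_2{\rm e}^{\alpha_2z^k}$ would be small with respect to a rational function, which is absurd. We intend to apply the second main theorem, in its three small functions form, to $g:=f^{n}$, written as a combination of two exponentials.

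First we relate the growth of $g$ to that of the exponentials. Put $h={\rm e}^{(\alpha_1-\alpha_2)z^k}$, so that
\[
f^n=p_2{\rm e}^{\alpha_2z^k}\bigl(\tfrac{p_1}{p_2}h+1\bigr).
\]
A standard argument (taking logarithmic derivatives, or comparing characteristic functions) should give $T(r,{\rm e}^{\alpha_jz^k})\le C\,T(r,f)+S(r,f)$ and $T(r,h)\le C'T(r,f)+S(r,f)$. Hence $p_1,p_2$ are small with respect to $h$ and to ${\rm e}^{\alpha_jz^k}$, and all the $S$-terms below coincide.

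Next we count zeros and poles. The poles of $f^n$ come only from poles of $p_1,p_2$, so $\overline N(r,f^n)=S(r,f)$. The zeros of $f^n$ have multiplicity at least $n\ge2$. They are exactly the zeros of $\frac{p_1}{p_2}h+1$, up to zeros and poles of $p_j$. We now apply the second main theorem for small functions (Yamanoi, or Nevanlinna's version for three small functions) to $F:=\frac{p_1}{p_2}h$, which has $T(r,F)=T(r,h)+S(r,f)$. Since $h$ has no zeros or poles, $\overline N(r,F)+\overline N(r,1/F)=S(r,f)$, and therefore
\[
T(r,F)\le \overline N\bigl(r,\tfrac{1}{F+1}\bigr)+S(r,f).
\]
Every zero of $F+1$ outside the zeros and poles of $p_1,p_2$ is a zero of $f^n$, so it has multiplicity at least $n$. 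This gives
\[
\overline N\bigl(r,\tfrac1{F+1}\bigr)\le \tfrac1n N\bigl(r,\tfrac1{F+1}\bigr)+S(r,f)\le \tfrac1n T(r,F)+S(r,f).
\]
Thus $(1-\frac1n)T(r,F)\le S(r,f)$. Because $n\ge2$, this forces $T(r,h)=S(r,f)$.

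It remains to contradict $T(r,h)=S(r,f)$. This is where we expect the real difficulty: the smallness must be measured against $f$, so we need the reverse inequality $T(r,f)=O(T(r,h))$. From the equation, $nT(r,f)=T(r,f^n)\le T(r,{\rm e}^{\alpha_2z^k})+T(r,F+1)+S(r,f)$. Both $T(r,{\rm e}^{\alpha_2z^k})$ and $T(r,h)$ are $\asymp r^k$, since $\alpha_2\ne0$ and $\alpha_1\ne\alpha_2$. Hence $T(r,f)\le K r^k$ and $T(r,h)\ge c\,r^k$, so $T(r,h)\ge c'T(r,f)$ for large $r$. This contradicts $T(r,h)=S(r,f)$ once we check that the exceptional set of finite logarithmic measure does not interfere; since both sides are comparable to $r^k$, it does not. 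This contradiction proves the lemma.

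If Yamanoi's theorem is deemed too heavy, we will instead use the classical three small functions version. This works because $0$ and $\infty$ are Picard-exceptional for $h$, so only the single small target $-p_2/p_1$ needs counting. Alternatively, we can differentiate: set $\phi=(f^n)'/f^n$ and eliminate the exponentials to get an identity that forces $p_1,p_2$ to satisfy a first order relation, which is incompatible with multiple zeros.
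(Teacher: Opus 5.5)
Your argument is correct in substance, but it takes a different route from the paper. The paper proceeds in three steps.
\begin{itemize}
\item It first proves $N(r,f)=S(r,f)$ and obtains two-sided growth control. The upper bound $nT(r,f)\le\frac{|\alpha_1|+|\alpha_2|}{\pi}r^k+S(r,f)$ comes directly from the equation. The lower bound $T(r,{\rm e}^{\alpha_2z^k})\le nT(r,f)+S(r,f)$ comes from differentiating \eqref{eq2.2} and eliminating ${\rm e}^{\alpha_1z^k}$.
\item It then shows that every multiple zero of $p_1{\rm e}^{\alpha_1z^k}+p_2{\rm e}^{\alpha_2z^k}$ is a zero of the small Wronskian-type function $A=p_1p_2'-p_1'p_2+(\alpha_2-\alpha_1)kz^{k-1}p_1p_2$. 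Since $n\ge2$, this gives $N(r,1/f)=S(r,f)$.
\item Finally it applies the untruncated second main theorem to $h$ with the small targets $0,\infty,-p_2/p_1$, which gives $T(r,h)\le N(r,1/f^n)+S=S$.
\end{itemize}

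You instead normalize to $F=\frac{p_1}{p_2}h$, so the targets $0,-1,\infty$ are constants. You then use the classical second main theorem with truncated counting functions, together with $\overline N\le\frac1n N$ off the zeros and poles of $p_2$. This bypasses the Wronskian computation and the lower bound on $T(r,f)$, and it uses $n\ge2$ only through $1-\frac1n>0$. In particular you need neither Yamanoi's theorem nor the small-function version of the second main theorem. What the paper's longer route buys is extra information: $\sigma(f)=k$, $S(r,{\rm e}^{z^k})=S(r,f)$ and $N(r,1/f)=S(r,f)$. The paper reuses these facts in Lemma~\ref{le2.3}.

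One step in your second paragraph runs in the wrong direction and should be removed. From $T(r,h)\le C'T(r,f)+S(r,f)$ you cannot conclude that $p_1,p_2$ are small with respect to $h$. That needs $T(r,f)=O(T(r,h))$, i.e.\ the upper bound $T(r,f)\le Kr^k$ outside a set of finite logarithmic measure. You only derive that bound in your last paragraph, from $f^n=p_2{\rm e}^{\alpha_2z^k}(F+1)$.

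The lower bound $T(r,{\rm e}^{\alpha_jz^k})\le CT(r,f)+S(r,f)$, which you leave unproved, is never actually needed. You seem to use it only to identify the remainder $S(r,F)$ with $S(r,f)$. That remainder can instead be absorbed into the left-hand side of $(1-\frac1n)T(r,F)\le S(r,f)+S(r,F)$. Alternatively, once $T(r,p_j)=o(r^k)$ the function $F$ has finite order, so $S(r,F)=O(\log r)=S(r,f)$.

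So move the upper-bound computation to the start and drop the unproved comparison; the rest of your argument, including the final handling of the exceptional set, then goes through.
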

\begin{proof} Suppose that \eqref{eq2.2} has a meromorphic solution $f(z)$ such that
$T(r,p_{j})=S(r,f)~(j=1,2)$. Since $p_{1}(z){\rm e}^{\alpha_{1}z^{k}}+p_{2}(z){\rm e}^{\alpha_{2}z^{k}}$
is transcendental, then every meromorphic solution $f(z)$ of \eqref{eq2.2} is also transcendental.
By \eqref{eq2.2}, we have
\begin{equation*}
nN(r,f)=N(r,f^{n})=N(r,p_{1}(z){\rm e}^{\alpha_{1}z^{k}}+p_{2}(z){\rm e}^{\alpha_{2}z^{k}})=S(r,f),
\end{equation*}
which implies that $N(r,f)=S(r,f)$. In addition, it follows from \eqref{eq2.2} that
\begin{equation}\label{eq2.3}
nT(r,f)=T(r,f^{n})=T(r,p_{1}(z){\rm e}^{\alpha_{1}z^{k}}+p_{2}(z){\rm e}^{\alpha_{2}z^{k}})
\leq \frac{|\alpha_{1}|+|\alpha_{2}|}{\pi}r^{k}+S(r,f).
\end{equation}
Differentiating \eqref{eq2.2} and eliminating ${\rm e}^{\alpha_{1}z^{k}}$ yields
\begin{equation}\label{eq2.4}
np_{1}f^{n-1}f'-(p_{1}'+\alpha_{1}kz^{k-1}p_{1})f^{n}=A{\rm e}^{\alpha_{2}z^{k}},
\end{equation}
where $A=p_{1}p_{2}'-p_{1}'p_{2}+(\alpha_{2}-\alpha_{1})kz^{k-1}p_{1}p_{2}$ and $A\not\equiv 0$.
Otherwise, ${\rm e}^{(\alpha_{1}-\alpha_{2})z^{k}}=c_0p_{2}/p_{1}$,
$c_0\in \mathbb{C}\setminus\{0\}$. By \eqref{eq2.3}, we get
$T(r,{\rm e}^{(\alpha_{1}-\alpha_{2})z^{k}})=T(r,c_0p_{2}/p_{1})=S(r,f)\leq S(r,{\rm e}^{z^{k}})$,
which implies that $\alpha_{1}=\alpha_{2}$, a contradiction. Since $N(r,f)=S(r,f)$,
it follows from \eqref{eq2.4} that
\begin{equation}\label{eq2.5}
T(r,{\rm e}^{\alpha_{2}z^{k}})=m(r,{\rm e}^{\alpha_{2}z^{k}})\leq nm(r,f)+S(r,f)
=nT(r,f)+S(r,f).
\end{equation}
Combining \eqref{eq2.3} and \eqref{eq2.5}, we get
\begin{equation}\label{eq2.6}
\sigma(f)=k,\quad S(r,{\rm e}^{z^{k}})=S(r,f).
\end{equation}
Suppose that $z_{0}$ is a zero of $p_{1}(z){\rm e}^{\alpha_{1}z^{k}}+p_{2}(z){\rm e}^{\alpha_{2}z^{k}}$
with multiplicity $m\geq2$, then $z_{0}$ satisfies the system of equations
\begin{equation}\label{eq2.7}
p_{1}(z){\rm e}^{\alpha_{1}z^{k}}+p_{2}(z){\rm e}^{\alpha_{2}z^{k}}=0, \quad
(p_{1}'(z)+\alpha_{1}kz^{k-1}p_{1}(z)){\rm e}^{\alpha_{1}z^{k}}+
(p_{2}'(z)+\alpha_{2}kz^{k-1}p_{2}(z)){\rm e}^{\alpha_{2}z^{k}}=0.
\end{equation}
Then $z_{0}$ must be a zero of
$A(z)=p_{1}(z)p_{2}'(z)-p_{1}(z)'p_{2}(z)+(\alpha_{2}-\alpha_{1})kz^{k-1}p_{1}(z)p_{2}(z)$.
Note that $T(r,A)=S(r,f)$, this leads to
\begin{equation}\label{eq2.8}
N_{(2}\Big(r,\frac{1}{p_{1}(z){\rm e}^{\alpha_{1}z^{k}}+p_{2}(z){\rm e}^{\alpha_{2}z^{k}}}\Big)=S(r,f).
\end{equation}
On the other hand, since $n\geq 2$, by \eqref{eq2.2}, we see that every zero of $f(z)$ is necessarily a
multiple zero of $p_{1}(z){\rm e}^{\alpha_{1}z^{k}}+p_{2}(z){\rm e}^{\alpha_{2}z^{k}}$.
It follows from \eqref{eq2.8} that
\begin{equation}\label{eq2.9}
N\Big(r,\frac{1}{f}\Big)=S(r,f).
\end{equation}
By \eqref{eq2.2}, \eqref{eq2.6}, \eqref{eq2.9} and the second main theorem, we have
\begin{eqnarray*}
T(r,{\rm e}^{(\alpha_{1}-\alpha_{2})z^{k}})&\leq &
N\Big(r,\frac{1}{{\rm e}^{(\alpha_{1}-\alpha_{2})z^{k}}}\Big)
+N(r,{\rm e}^{(\alpha_{1}-\alpha_{2})z^{k}})
+N\Big(r,\frac{1}{{\rm e}^{(\alpha_{1}-\alpha_{2})z^{k}}+\frac{p_{2}}{p_{1}}}\Big)\\
&& +S(r,{\rm e}^{z^{k}})\\
&\leq & N\Big(r,\frac{1}{p_{1}{\rm e}^{\alpha_{1}z^{k}}+p_{2}{\rm e}^{\alpha_{2}z^{k}}}\Big)
+S(r,{\rm e}^{z^{k}})\\
&= & N\Big(r,\frac{1}{f^{n}}\Big)+S(r,{\rm e}^{z^{k}})=S(r,{\rm e}^{z^{k}}),
\end{eqnarray*}
this leads to a contradiction with
$T(r,{\rm e}^{(\alpha_{1}-\alpha_{2})z^{k}})=\frac{|\alpha_{1}-\alpha_{2}|}{\pi}r^{k}$.
Then Eq.\eqref{eq2.2} admits no meromorphic solution $f(z)$ satisfying $T(r,p_{j})=S(r,f)~(j=1,2)$.
\end{proof}

\begin{remark}\label{re2.1}
Lemma \ref{le2.2} remains valid if Eq.\eqref{eq2.1} is replaced by
$f^{n}(z)=p_{1}(z){\rm e}^{\alpha_{1}(z)}+p_{2}(z){\rm e}^{\alpha_{2}(z)}$,
where $\alpha_{1}(z),\alpha_{2}(z)$ are polynomials of degree k and satisfy
$\frac{\alpha_{1}^{(k)}}{\alpha_{2}^{(k)}}\neq 1$.
\end{remark}

\begin{lemma}\label{le2.3}
Let $n(\geq 2)$ and $k(\geq 1)$ be integers, $\alpha_{1},\alpha_{2}$
be distinct nonzero constants, and let $p_{1}(z),~p_{2}(z)$ be nonzero meromorphic functions.
Then the equation
\begin{equation}\label{eq2.10}
f^{n}(z)f'(z)=p_{1}(z){\rm e}^{\alpha_{1}z^{k}}+p_{2}(z){\rm e}^{\alpha_{2}z^{k}}
\end{equation}
admits no meromorphic solution $f(z)$ satisfying $T(r,p_{j})=S(r,f)~(j=1,2)$.
\end{lemma}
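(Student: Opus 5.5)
Our plan is to reduce Lemma \ref{le2.3} to Lemma \ref{le2.2} through the identity $f^{n}f'=\frac{1}{n+1}(f^{n+1})'$, and where the reduction does not apply directly we use the same multiplicity-counting argument as in the proof of Lemma \ref{le2.2}.

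\emph{Step 1 (poles and growth).} Suppose $f$ is a meromorphic solution of \eqref{eq2.10} with $T(r,p_{j})=S(r,f)$. The right-hand side is transcendental, so $f$ is transcendental. A pole of $f$ of order $m$ is a pole of $f^{n}f'$ of order $(n+1)m+1$. Hence
\[
(n+1)N(r,f)\leq N\big(r,p_{1}{\rm e}^{\alpha_{1}z^{k}}+p_{2}{\rm e}^{\alpha_{2}z^{k}}\big)=S(r,f),
\]
so $N(r,f)=S(r,f)$. Since $m(r,f^{n}f')\leq (n+1)m(r,f)+S(r,f)$, we get
\[
T\big(r,p_{1}{\rm e}^{\alpha_{1}z^{k}}+p_{2}{\rm e}^{\alpha_{2}z^{k}}\big)\leq (n+1)T(r,f)+S(r,f).
\]
Differentiating \eqref{eq2.10} and eliminating ${\rm e}^{\alpha_{1}z^{k}}$ as in \eqref{eq2.4} gives a differential polynomial in $f$ of degree $n+1$, with small coefficients, equal to $A{\rm e}^{\alpha_{2}z^{k}}$. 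Here $A$ is the same quantity as in Lemma \ref{le2.2}, and $A\not\equiv0$ by the same argument. It follows that $T(r,{\rm e}^{z^{k}})\leq O(T(r,f))$.

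For the reverse inequality, set $F=f^{n+1}$, so that $F'=(n+1)(p_{1}{\rm e}^{\alpha_{1}z^{k}}+p_{2}{\rm e}^{\alpha_{2}z^{k}})$. Since $F'$ has order at most $k$, so does $F$. This gives $\sigma(f)=k$ and $S(r,f)=S(r,{\rm e}^{z^{k}})$, exactly as in \eqref{eq2.6}.

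\emph{Step 2 (zeros of $f$).} Let $z_{0}$ be a zero of $f$ of multiplicity $m$ that is not a zero or pole of $p_{1}$, $p_{2}$ or $A$. Then $z_{0}$ is a zero of $f^{n}f'$ of multiplicity $(n+1)m-1\geq n\geq 2$. So $z_{0}$ is a multiple zero of $p_{1}{\rm e}^{\alpha_{1}z^{k}}+p_{2}{\rm e}^{\alpha_{2}z^{k}}$, and by the system \eqref{eq2.7} it is a zero of $A$. Therefore \eqref{eq2.8} holds, and $N(r,1/f)=S(r,f)$.

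\emph{Step 3 (contradiction).} We cannot rule out zeros of $f'$ that are not zeros of $f$, so we cannot simply copy the final step of Lemma \ref{le2.2}. Instead we argue with $F=f^{n+1}$. By Steps 1 and 2, $N(r,F)+N(r,1/F)=S(r,{\rm e}^{z^{k}})$. Since $\sigma(F)=k$, Hadamard factorization yields $F=\gamma{\rm e}^{\beta z^{k}}$ with $T(r,\gamma)=S(r,{\rm e}^{z^{k}})$; to get this we absorb the lower-degree part of the exponent into $\gamma$, using $T(r,{\rm e}^{cz^{j}})=o(r^{k})$ for $j<k$. Then
\[
F'=(\gamma'+k\beta z^{k-1}\gamma){\rm e}^{\beta z^{k}}=(n+1)\big(p_{1}{\rm e}^{\alpha_{1}z^{k}}+p_{2}{\rm e}^{\alpha_{2}z^{k}}\big).
\]
Dividing by ${\rm e}^{\alpha_{2}z^{k}}$ gives an identity among the three functions $1$, ${\rm e}^{(\beta-\alpha_{2})z^{k}}$ and ${\rm e}^{(\alpha_{1}-\alpha_{2})z^{k}}$, with small coefficients. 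Borel's lemma on exponential polynomials (or a second-main-theorem argument like the one at the end of Lemma \ref{le2.2}) will force contradictions in every case:
\begin{itemize}
\item If $\beta\neq\alpha_{1}$ and $\beta\neq\alpha_{2}$, the three exponents $\beta,\alpha_{1},\alpha_{2}$ are distinct in the leading term, so $p_{1}\equiv p_{2}\equiv0$, which is false.
\item If $\beta=\alpha_{1}$, then $p_{2}{\rm e}^{(\alpha_{2}-\alpha_{1})z^{k}}$ is small, which is impossible since $p_{2}\not\equiv0$ and $\alpha_{1}\neq\alpha_{2}$.
\item The case $\beta=\alpha_{2}$ is symmetric.
\end{itemize}

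The main obstacle is Step 3: we must justify carefully the representation $F=\gamma{\rm e}^{\beta z^{k}}$ with $\gamma$ small, and the application of Borel's lemma when the coefficients are only small functions rather than polynomials. We would first try to avoid factorization entirely, by applying the second main theorem to ${\rm e}^{(\alpha_{1}-\alpha_{2})z^{k}}$. Its only relevant third value is $-p_{2}/p_{1}$, and the zeros at that value are the zeros of $F'$. This route fails because zeros of $F'$ need not be zeros of $F$, so we fall back on the factorization argument above.
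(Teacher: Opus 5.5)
Steps 1 and 2 are essentially the ``analogous argument'' the paper invokes to get $N(r,f)=S(r,f)$ and $N(r,1/f)=S(r,f)$. The genuine gap is in Step 3.

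Hadamard's theorem gives $F=(\Pi_{1}/\Pi_{2}){\rm e}^{Q}$ with $\deg Q\le k$. Absorbing the lower-order part of $Q$ is harmless, but nothing makes the canonical products $\Pi_{1},\Pi_{2}$ small. The bound $N(r,1/F)=o(r^{k})$ still allows $\sum|a_{n}|^{-k}=\infty$ (e.g.\ $n(r)\sim r^{k}/\log r$), so $\Pi_{1}$ can have genus $k$. The Fourier-coefficient (Lindel\"of) estimate shows that $T(r,\Pi_{1}{\rm e}^{-\beta z^{k}})=o(r^{k})$ forces $\sum_{|a_{n}|<r}a_{n}^{-k}$ to converge as $r\to\infty$. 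This fails if the arguments of the zeros make those partial sums oscillate boundedly. So finite order together with $N(r,F)+N(r,1/F)=o(r^{k})$ does not give $F=\gamma{\rm e}^{\beta z^{k}}$ with $\gamma$ small. You offer nothing specific to the equation that would give it, and your Borel case analysis rests entirely on that representation.

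The missing idea is that the route you abandoned does work. Since $F'/F=(n+1)f'/f$, a zero of $F'$ that is not a zero of $F$ is a zero of $f'/f$. Hence $N(r,1/F')\le N(r,1/F)+T(r,f'/f)+O(1)$. By Steps 1--2 and the logarithmic derivative lemma, $T(r,f'/f)=\overline{N}(r,f)+\overline{N}(r,1/f)+S(r,f)=S(r,f)$. So $N(r,1/H)=S(r,f)$ for $H=p_{1}{\rm e}^{\alpha_{1}z^{k}}+p_{2}{\rm e}^{\alpha_{2}z^{k}}$. The second main theorem for ${\rm e}^{(\alpha_{1}-\alpha_{2})z^{k}}$ with targets $0,\infty,-p_{2}/p_{1}$ then finishes exactly as at the end of Lemma \ref{le2.2}.

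The paper packages this same observation as a reduction. It sets $h=f^{n+1}/H=f/f'$, which satisfies $T(r,h)=S(r,f)$. Then $f^{n+1}=hp_{1}{\rm e}^{\alpha_{1}z^{k}}+hp_{2}{\rm e}^{\alpha_{2}z^{k}}$ with small coefficients, and Lemma \ref{le2.2} with exponent $n+1\ge 3$ applies, with no factorization needed.

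One smaller repair in Step 1: $\sigma(F)=\sigma(F')$ only gives $\sigma(f)\le k$. It does not give the bound $T(r,f)=O(r^{k})$ needed to identify $S(r,f)$ with $S(r,{\rm e}^{z^{k}})$. Get that bound instead from $(n+1)T(r,f)\le T(r,f^{n}f')+T(r,f'/f)+O(1)$, which yields $(n-1)T(r,f)\le T(r,H)+S(r,f)$.
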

\begin{proof} Suppose that \eqref{eq2.10} has a meromorphic solution $f$ such that
$T(r,p_{j})=S(r,f)~(j=1,2)$. Since $p_{1}(z){\rm e}^{\alpha_{1}z^{k}}+p_{2}(z){\rm e}^{\alpha_{2}z^{k}}$
is transcendental, then every meromorphic solution $f(z)$ of \eqref{eq2.10} is also transcendental.
An argument analogous to that of Lemma \ref{le2.2} yields
$N(r,f)=S(r,f)$ and $N(r,\frac{1}{f})=S(r,f)$. By \eqref{eq2.10}, we have
\begin{equation}\label{eq2.11}
N\left(r,\frac{p_{1}(z){\rm e}^{\alpha_{1}z^{k}}+p_{2}(z){\rm e}^{\alpha_{2}z^{k}}}{f^{n+1}}\right)
=N\left(r,\frac{f^{n}f'}{f^{n+1}}\right)=\overline{N}(r,f)+\overline{N}\left(r,\frac{1}{f}\right)=S(r,f),
\end{equation}
\begin{equation}\label{eq2.12}
m\left(r,\frac{p_{1}(z){\rm e}^{\alpha_{1}z^{k}}+p_{2}(z){\rm e}^{\alpha_{2}z^{k}}}{f^{n+1}}\right)
=m\left(r,\frac{f^{n}f'}{f^{n+1}}\right)=S(r,f).
\end{equation}
It follows from \eqref{eq2.11} and \eqref{eq2.12} that
\begin{equation}\label{eq2.13}
T\left(r,\frac{p_{1}(z){\rm e}^{\alpha_{1}z^{k}}+p_{2}(z){\rm e}^{\alpha_{2}z^{k}}}{f^{n+1}}\right)
=S(r,f).
\end{equation}
Set $h(z)=\frac{f^{n+1}(z)}{p_{1}(z){\rm e}^{\alpha_{1}z^{k}}+p_{2}(z){\rm e}^{\alpha_{2}z^{k}}}$,
then
\begin{equation}\label{eq2.14}
f^{n+1}(z)=p_{1}(z)h(z){\rm e}^{\alpha_{1}z^{k}}+p_{2}(z)h(z){\rm e}^{\alpha_{2}z^{k}},
\end{equation}
where $n+1\geq 3$ and $T(r,p_{j}h)=S(r,f)~(j=1,2)$.
By Lemma \ref{le2.2}, the Eq.\eqref{eq2.14} cannot admit a meromorphic solution $f(z)$ such that
$T(r,p_{j}h)=S(r,f)~(j=1,2)$. This contradicts the assumption.
\end{proof}

\begin{remark}\label{re2.2}
Lemma \ref{le2.3} remains valid if Eq.\eqref{eq2.10} is replaced by
$f^{n}(z)f'(z)=p_{1}(z){\rm e}^{\alpha_{1}(z)}+p_{2}(z){\rm e}^{\alpha_{2}(z)}$,
where $\alpha_{1}(z),\alpha_{2}(z)$ are polynomials of degree k and satisfy
$\frac{\alpha_{1}^{(k)}}{\alpha_{2}^{(k)}}\neq 1$.
\end{remark}

\begin{lemma}(\cite[Theorem~1.51]{Y})\label{le2.4}
Suppose that $f_{1}, f_{2},\ldots,f_{n}(n\geq 2)$ are meromorphic functions and
$g_{1}, g_{2},\ldots,g_{n}$ are entire functions satisfying the following conditions:\\
$(1)$ $\sum_{j=1}^{n}\limits f_{j}e^{g_{j}}\equiv 0.$\\
$(2)$ $g_{j}-g_{k}$ are not constants for $1\leq j<k\leq n$.\\
$(3)$ For $1\leq j\leq n, 1\leq h<k\leq n$,
$$T(r,f_{j})=o(T(r,e^{g_{h}-g_{k}}))~(r\rightarrow \infty, r\not\in E),$$
where $E$ is a set with finite linear measure or logarithmic measure.\\
Then $f_{j}\equiv 0~(j=1,\ldots, n)$.
\end{lemma}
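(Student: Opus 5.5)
The plan is to argue by induction on the number of nonvanishing coefficients. We may assume, for a contradiction, that some $f_{j}\not\equiv 0$, and discard all vanishing terms. That leaves a relation $\sum_{j\in J}f_{j}e^{g_{j}}\equiv 0$ with all $f_{j}\not\equiv 0$. Hypotheses $(2)$ and $(3)$ are inherited by any subfamily, so we may assume $|J|=m$ is minimal among all such nontrivial relations.

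The case $m=1$ is impossible, since $e^{g_{j}}$ never vanishes. If $m=2$, say $J=\{h,k\}$, then $e^{g_{h}-g_{k}}=-f_{k}/f_{h}$. Hence
$$T(r,e^{g_{h}-g_{k}})\le T(r,f_{h})+T(r,f_{k})+O(1)=o\big(T(r,e^{g_{h}-g_{k}})\big)$$
outside $E$. Because $g_{h}-g_{k}$ is nonconstant, $T(r,e^{g_{h}-g_{k}})\to\infty$, so this is a contradiction.

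For $m\ge 3$, fix $l\in J$ and divide by $-f_{l}e^{g_{l}}$. Writing $F_{j}=-f_{j}e^{g_{j}-g_{l}}/f_{l}$ for $j\in J\setminus\{l\}$, this gives
$$\sum_{j\in J\setminus\{l\}}F_{j}\equiv 1.$$
I would first show that the $F_{j}$ are linearly independent over $\mathbb{C}$. Suppose instead that $\sum c_{j}F_{j}\equiv 0$ with some $c_{j}\neq 0$. Multiplying back by $-f_{l}e^{g_{l}}$ gives a relation $\sum c_{j}f_{j}e^{g_{j}}\equiv 0$ with fewer than $m$ nonzero terms. Its coefficients $c_{j}f_{j}$ still satisfy $(3)$ and are not all zero, which contradicts the minimality of $m$.

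With linear independence in hand, I would apply Nevanlinna's theorem on sums of linearly independent meromorphic functions equal to $1$ (the second main theorem for $m-1$ functions, via the Wronskian). It gives, for each $j$,
$$T(r,F_{j})\le\sum_{i}\Big(N\big(r,\tfrac{1}{F_{i}}\big)+N(r,F_{i})\Big)+N(r,W)-N\big(r,\tfrac{1}{W}\big)+o\Big(\max_{i}T(r,F_{i})\Big),$$
or the truncated version in which each counting function is truncated at level $m-2$. The exponentials $e^{g_{j}-g_{l}}$ have no zeros or poles, so the zeros and poles of each $F_{i}$ come only from $f_{i}$ and $f_{l}$. Therefore the right-hand side is at most
$$C\sum_{i}T(r,f_{i})+o\Big(\max_{i}T(r,F_{i})\Big)$$
for a constant $C$ depending only on $m$.

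On the other side, $T(r,F_{j})\ge T(r,e^{g_{j}-g_{l}})-T(r,f_{j})-T(r,f_{l})-O(1)$. By $(3)$, every $T(r,f_{i})$ is $o\big(T(r,e^{g_{j}-g_{l}})\big)$, and also $o\big(T(r,F_{j})\big)$ by the same inequality. Now choose $j$ so that $T(r,F_{j})$ is maximal on a set of $r$ of infinite measure outside $E$; such a $j$ exists because there are finitely many indices. Along that set we obtain
$$T(r,F_{j})\le o\big(T(r,F_{j})\big),$$
which is absurd. This contradiction shows that no nontrivial relation exists, so $f_{j}\equiv 0$ for all $j$.

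The main obstacle is the bookkeeping of error terms. The $S$-term in Nevanlinna's theorem is $o(\max_{i}T(r,F_{i}))$ outside an exceptional set, and condition $(3)$ is stated relative to every difference $g_{h}-g_{k}$. These must be compared on a common set of $r$ avoiding both exceptional sets, which is possible since each has finite logarithmic (or linear) measure. I would handle this by working with $\max_{i}T(r,F_{i})$ throughout rather than with a single $T(r,F_{j})$.
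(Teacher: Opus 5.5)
Your proof is correct. The paper does not prove this lemma; it quotes it from Yang--Yi. The relevant comparison is therefore with the standard proof of this Borel-type theorem, and yours follows that same route: reduce to a shortest nontrivial relation (equivalently, induct on $n$), normalize to $\sum_{j\neq l}F_j\equiv 1$, obtain linear independence from minimality, and contradict (3) using Nevanlinna's Wronskian theorem for linearly independent functions summing to $1$.

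Two small points are worth stating explicitly:
\begin{itemize}
\item The minimal relation should be taken over all coefficient families satisfying (3), not only over subfamilies of the original $f_j$. This is what makes the relation with coefficients $c_jf_j$ admissible in your linear-independence step.
\item Your weakened form of Nevanlinna's inequality needs the bound $N(r,W)\le\bigl(1+\binom{m-1}{2}\bigr)\sum_i N(r,F_i)$. This holds because at each pole, the order of the Wronskian exceeds the sum of the pole orders of the $F_i$ by at most $\binom{m-1}{2}$.
\end{itemize}
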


\begin{lemma}(\cite[Corollary~3.3]{HK})\label{le2.5}
Let $f$ be a non-constant finite order meromorphic solution of
$f^{n}(z)P(z,f)=Q(z,f)$,
where $P(z,f)$ and $Q(z,f)$ are difference polynomials
in $f$ with small meromorphic coefficients, and let $c\in \mathbb{C}$, $\delta<1$.
If the total degree of $Q(z,f)$ as a polynomial
in $f$ and its shifts is $\leq n$, then
\begin{equation*}
m(r,P(z,f))=O\left(\frac{T(r+|c|,f)}{r^{\delta}}\right)+o(T(r,f))
\end{equation*}
for all $r$ outside of a possible exceptional set with finite logarithmic measure.
\end{lemma}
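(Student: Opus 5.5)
The statement to be proved is Lemma \ref{le2.5}, which is quoted from \cite[Corollary~3.3]{HK}. I would prove it by the Clunie-type argument of Halburd--Korhonen, replacing the lemma on the logarithmic derivative by its difference analogue. Because $f$ has finite order, the difference analogue of the logarithmic derivative lemma gives
\[
m\Big(r,\frac{f(z+c)}{f(z)}\Big)=O\Big(\frac{T(r+|c|,f)}{r^{\delta}}\Big)
\]
for every $\delta<1$, outside an exceptional set of finite logarithmic measure. This is the only analytic input. Everything else is the standard splitting of the circle $|z|=r$ into the set $E_1=\{\theta:|f(re^{i\theta})|\le 1\}$ and the set $E_2=\{\theta:|f(re^{i\theta})|>1\}$.

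\emph{Estimate on $E_1$.} Write $P(z,f)=\sum_\lambda b_\lambda(z)\prod_\nu f(z+c_\nu)^{\lambda_\nu}$. On $E_1$ I divide each monomial by the matching power of $f(z)$:
\[
\prod_\nu f(z+c_\nu)^{\lambda_\nu}=f(z)^{|\lambda|}\prod_\nu\Big(\frac{f(z+c_\nu)}{f(z)}\Big)^{\lambda_\nu}.
\]
Since $|f|\le 1$ there, $|f(z)|^{|\lambda|}\le 1$. So $|P|$ is bounded by a sum of coefficient moduli times products of the shift quotients. Integrating $\log^+$ then bounds the contribution of $E_1$ to $m(r,P)$ by $\sum m(r,b_\lambda)$ plus a finite sum of terms $m(r,f(z+c_\nu)/f(z))$. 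This is $O(T(r+|c|,f)/r^{\delta})+o(T(r,f))$.

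\emph{Estimate on $E_2$.} Here I use the equation $P=Q/f^{n}$. I write $Q=\sum_\kappa a_\kappa\prod_\nu f(z+c_\nu)^{\kappa_\nu}$ with $|\kappa|\le n$. Each monomial divided by $f^{n}$ equals
\[
f(z)^{|\kappa|-n}\prod_\nu\Big(\frac{f(z+c_\nu)}{f(z)}\Big)^{\kappa_\nu}.
\]
Since $|f|>1$ and $|\kappa|-n\le 0$, the factor $|f|^{|\kappa|-n}\le 1$. This is exactly where the hypothesis $\deg Q\le n$ is used. The same integration gives the same bound for the contribution of $E_2$. Adding the two pieces, and absorbing $\log$ of the number of terms into $O(1)$, gives the claim.

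The main obstacle is the difference logarithmic derivative lemma itself. It is invoked as the established result of Halburd--Korhonen, and its finite-order hypothesis is why $\sigma(f)<\infty$ is needed. It also produces the $T(r+|c|,f)/r^{\delta}$ form and the exceptional set of finite logarithmic measure. The remaining care is bookkeeping: the shifts $c_\nu$ run over finitely many constants, so one takes $|c|$ to be the maximum $|c_\nu|$. The exceptional sets are finitely many, each of finite logarithmic measure, so their union is still of finite logarithmic measure.
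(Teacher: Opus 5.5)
The paper does not prove this lemma itself but quotes it from Halburd--Korhonen. Your argument is exactly the Clunie-type proof given there, and it is correct: you split $[0,2\pi]$ into $\{|f|\le 1\}$ and $\{|f|>1\}$, rewrite each monomial through the quotients $f(z+c_\nu)/f(z)$, use $P=Q/f^{n}$ together with $\deg Q\le n$ on the second set, and then apply the finite-order difference analogue of the logarithmic derivative lemma (your reading of $c$ as $\max_\nu|c_\nu|$, so that $T(r+|c_\nu|,f)\le T(r+|c|,f)$ by monotonicity, is the intended interpretation of the loosely stated hypothesis).
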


\begin{remark}\label{re2.3}
Lemma \ref{le2.5} is still holds for the case $P(z,f)$, $Q(z,f)$ being differential-difference
polynomials in $f$ with small meromorphic coefficients. In particular, $\sigma(f)<\infty$,
if the coefficients of $P(z,f)$ and $Q(z,f)$ are polynomials or rational functions, then
\begin{equation*}
m(r,P(z,f))=S(r,f)+O(\log r),
\end{equation*}
where $r$ is sufficiently large.
\end{remark}

\begin{lemma}(\cite[Corollary~2.6]{CF})\label{le2.6}
Let $\eta_{1},\eta_{2}$ be two complex numbers such that $\eta_{1}\neq\eta_{2}$
and let $f$ be a finite order meromorphic function. Let $\sigma$ be the order
of $f$, then for each $\varepsilon>0$, we have
\begin{equation*}
m\left(r,\frac{f(z+\eta_{1})}{f(z+\eta_{2})}\right)=O(r^{\sigma-1+\varepsilon})+O(\log r).
\end{equation*}
\end{lemma}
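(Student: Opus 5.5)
The plan follows the Chiang--Feng approach to the difference analogue of the logarithmic derivative lemma. First, reduce to a single shift. Put $\eta=\eta_{1}-\eta_{2}\neq 0$ and $g(w)=f(w)$ with $w=z+\eta_{2}$. Then
\[
\frac{f(z+\eta_{1})}{f(z+\eta_{2})}=\frac{g(w+\eta)}{g(w)}.
\]
The circle $|z|=r$ corresponds to a circle of radius $r$ centred at $\eta_{2}$ in the $w$-plane. I would therefore prove the estimate for $\log\bigl|g(w+\eta)/g(w)\bigr|$ pointwise, uniformly for $w$ in a disc $|w|\le r+|\eta_{2}|$. All subsequent bounds are in terms of $T(2r+C,f)$ and $n(2r+C,f)$, $n(2r+C,1/f)$, where $C$ depends only on $\eta_{1},\eta_{2}$. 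For finite order these quantities grow like $O(r^{\sigma+\varepsilon})$ regardless of the fixed shift $C$, so the change of centre costs nothing.

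The core step is the Poisson--Jensen formula on $|\zeta|<R$ with $R=2(r+|\eta|+|\eta_{2}|)$, say. If $g(0)=0$ or $\infty$, first divide by a power of $\zeta$; this only contributes the $O(\log r)$ term. Subtracting the formulas for $\log|g(w+\eta)|$ and $\log|g(w)|$ leaves two kinds of contributions. The first is the Poisson-integral part, whose kernel difference is $O(|\eta|R/(R-|w|-|\eta|)^{2})$. Its contribution is therefore bounded by $O\bigl(|\eta|\,(m(R,g)+m(R,1/g))/r\bigr)=O(T(R,f)/r)$. The second is a sum over the zeros and poles $a_{\nu}$ in $|\zeta|<R$ of differences of the terms $\log\bigl|\frac{R(\zeta-a_{\nu})}{R^{2}-\bar a_{\nu}\zeta}\bigr|$. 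The Blaschke-denominator differences are again $O(|\eta|/r)$ per point, giving $O\bigl((n(R,g)+n(R,1/g))/r\bigr)$. What remains are the terms $\log\bigl|1+\frac{\eta}{w-a_{\nu}}\bigr|$.

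The main obstacle is controlling these remaining terms after integrating $\log^{+}$ over the circle. I would use the elementary inequality $\log^{+}|1+x|\le \log^{+}|x|+\log 2$ only for $|x|\ge 1$, and for small $|x|$ the sharper bound
\[
\bigl|\log|1+x|\bigr|\le C_{\delta}|x|^{\delta},\qquad 0<\delta<1.
\]
This reduces matters to bounding
\[
\frac{1}{2\pi}\int_{0}^{2\pi}\Bigl|\frac{\eta}{re^{i\theta}-a_{\nu}}\Bigr|^{\delta}d\theta\le \frac{K|\eta|^{\delta}}{(1-\delta)r^{\delta}},
\]
which holds because $|re^{i\theta}-a|^{-\delta}$ is integrable on the circle uniformly in $a$ when $\delta<1$. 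The points where $|x|\ge1$ are handled by the same integrable singularity. Summing over the $n(R,g)+n(R,1/g)$ points gives
\[
m\Bigl(r,\frac{g(w+\eta)}{g(w)}\Bigr)\le \frac{K_{\delta}|\eta|^{\delta}}{r^{\delta}}\Bigl(T(R,f)+n(R,f)+n\bigl(R,\tfrac1f\bigr)\Bigr)+O(\log r).
\]

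Finally, for $f$ of order $\sigma$ we have $T(R,f)$, $n(R,f)$ and $n(R,1/f)$ all $O(r^{\sigma+\varepsilon/2})$. Choosing $\delta=1-\varepsilon/2$ gives $O(r^{\sigma-1+\varepsilon})+O(\log r)$, as claimed. No exceptional set arises, since the estimate is pointwise in $r$.
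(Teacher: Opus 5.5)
Your outline is the Chiang--Feng proof itself. It uses Poisson--Jensen on a disc of radius comparable to $r$, kernel and Blaschke-denominator differences of size $O(|\eta|/r)$, and controls the factors $\log|1+\eta/(w-a_\nu)|$ through a power $|x|^{\delta}$ together with the uniform bound $\frac{1}{2\pi}\int_0^{2\pi}|re^{i\theta}-a|^{-\delta}\,d\theta\le (1-\delta)^{-1}r^{-\delta}$. The paper gives no argument of its own here and simply imports the lemma from Chiang and Feng, so there is no different route to compare.

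\textbf{One step is too loose.} The pole terms enter $\log|g(w+\eta)/g(w)|$ with a minus sign. Near $w=b_\nu-\eta$, a pole of $g(w+\eta)$ and exactly where $\log^{+}|g(w+\eta)/g(w)|$ blows up, you have $x=\eta/(w-b_\nu)\to-1$, so you need an upper bound for $\log^{+}(1/|1+x|)$. Neither of your inequalities gives this:
\begin{itemize}
\item $\log^{+}|1+x|\le\log^{+}|x|+\log 2$ bounds only the positive part of $\log|1+x|$;
\item $|\log|1+x||\le C_\delta|x|^{\delta}$ holds only for $|x|\le c<1$.
\end{itemize}
The phrase ``handled by the same integrable singularity'' does not cover this point as written.

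\textbf{The fix} is the symmetric inequality that Chiang and Feng use,
\[
\Bigl|\log\Bigl|\frac{z_1}{z_2}\Bigr|\Bigr|\le\frac{1}{\delta}\Bigl(\Bigl|\frac{z_1-z_2}{z_2}\Bigr|^{\delta}+\Bigl|\frac{z_1-z_2}{z_1}\Bigr|^{\delta}\Bigr),\qquad 0<\delta\le 1,
\]
applied with $z_1=w+\eta-a_\nu$ and $z_2=w-a_\nu$ for every zero or pole $a_\nu$. It follows from $\log(1+u)\le u^{\delta}/\delta$ applied to whichever of $|z_1/z_2|$ or $|z_2/z_1|$ is at least $1$. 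The new term $|\eta/(w+\eta-a_\nu)|^{\delta}$ has its singularity at $a_\nu-\eta$. Its circle mean obeys the same uniform bound, since $|re^{i\theta}-c|\ge r|\sin(\theta-\arg c)|$ for every $c$. So your final estimate, and the choice $\delta=1-\varepsilon/2$, go through unchanged.

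\textbf{On the two-shift reduction.} Chiang and Feng avoid recentering by writing $\frac{f(z+\eta_1)}{f(z+\eta_2)}=\frac{f(z+\eta_1)}{f(z)}\cdot\frac{f(z)}{f(z+\eta_2)}$ and using the two-sided single-shift bound for $m\bigl(r,f(z+\eta)/f(z)\bigr)+m\bigl(r,f(z)/f(z+\eta)\bigr)$. Your recentering also works, because the singular-integral bound is uniform in the location of the singularity.
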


\begin{lemma}(\cite[Lemma~2.5]{M})\label{le2.7}
Let $m,k$ be positive integers, $\alpha_{1},\alpha_{2},\ldots,\alpha_{m}$ be distinct nonzero complex numbers,
and $A_{0},A_{1},\ldots,A_{m}$ be meromorphic functions of order less than $k$ such that
$A_{j}\not\equiv 0(1\leq j\leq m)$. Set $\varphi(z)=A_{0}(z)+\sum_{j=1}^{m}A_{j}(z){\rm e}^{\alpha_{j}z^{k}}$,
then the following results hold.\par
(i) There exist two positive numbers $\delta_{1}<\delta_{2}$, such that for sufficiently large $r$,
$\delta_{1}r^{k}\leq T(r,\varphi)\leq \delta_{2}r^{k}$.\\
\indent (ii) If $A_{0}\not\equiv 0$, then $m(r,\frac{1}{\varphi})=S(r,{\rm e}^{z^{k}})~(r\to \infty)$.
\end{lemma}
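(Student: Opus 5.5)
The statement is quoted from \cite{M}. Since we only need the two estimates, we sketch a self-contained argument. Write $E_j(z)={\rm e}^{\alpha_{j}z^{k}}$. The $A_j$ have order $<k$, so $T(r,A_j)=o(r^{k})$. This is $S(r,{\rm e}^{z^{k}})$, because $T(r,E_j)=\frac{|\alpha_j|}{\pi}r^{k}$.

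\emph{Upper bound in (i).} This is immediate:
\[
T(r,\varphi)\le \sum_{j=0}^{m}T(r,A_j)+\sum_{j=1}^{m}T(r,E_j)+O(1)\le \Big(\sum_{j}\frac{|\alpha_j|}{\pi}+1\Big)r^{k}
\]
for large $r$.

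\emph{Lower bound in (i).} I would bound $m(r,\varphi)$ from below directly, using a sector of dominance.
\begin{itemize}
\item Let $K$ be the convex hull of $\{0,\alpha_1,\dots,\alpha_m\}$. Since $m\ge1$ and the $\alpha_j$ are nonzero, $K$ is not $\{0\}$. Hence $K$ has a nonzero extreme point, say $\alpha_{j_0}$.
\item Because $\alpha_{j_0}$ is extreme, there is a direction $w$ with $|w|=1$ such that ${\rm Re}(\alpha_{j_0}w)>\max\big(0,\max_{j\ne j_0}{\rm Re}(\alpha_j w)\big)$. Choose $\theta_0$ with ${\rm e}^{ik\theta_0}=w$.
\item By continuity there is a small $\eta>0$ and $\delta>0$ such that for $\theta\in(\theta_0-\eta,\theta_0+\eta)$ we have ${\rm Re}(\alpha_{j_0}{\rm e}^{ik\theta})\ge\delta$ and ${\rm Re}(\alpha_{j_0}{\rm e}^{ik\theta})-{\rm Re}(\alpha_{j}{\rm e}^{ik\theta})\ge\delta$ for all $j\ne j_0$ (where $\alpha_0:=0$).
\item Write each $A_j$ as a quotient of entire functions of order $<k$. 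The standard minimum-modulus (Cartan/Boutroux) estimates give $\big|\log|A_j(z)|\big|\le r^{k-\epsilon}$ on $|z|=r$, outside a union of discs of small total radius. This holds for all $r$ outside a set of finite logarithmic measure.
\item On the sector, $\log|\varphi(z)|\ge \delta r^{k}-O(r^{k-\epsilon})$, so $m(r,\varphi)\ge \frac{\eta\delta}{\pi}r^{k}(1+o(1))$ outside that exceptional set.
\item Finally, monotonicity of $T$ removes the exceptional set: for any large $r$ pick $r'\in[r/2,r]$ outside the set, so that $T(r,\varphi)\ge T(r',\varphi)\ge\delta_1 r^{k}$.
\end{itemize}
The step needing most care is this minimum-modulus control of the small coefficients, in particular of $A_{j_0}$ from below.

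\emph{Part (ii).} I would build a linear differential operator annihilating the exponential terms. For $j=1,\dots,m$ set
\[
L_j(g)=g'-\Big(\frac{A_j'}{A_j}+k\alpha_j z^{k-1}\Big)g,
\]
so that $L_j(A_jE_j)=0$. Let $L=L_1\circ\cdots\circ L_m$. Because the coefficients of $L$ are meromorphic functions of order $<k$ and the $L_j$ act diagonally on terms of the form $B\,E_i$, we get $L(A_iE_i)=0$ for every $i$. Hence
\[
L(\varphi)=L(A_0)=:B,\qquad T(r,B)=o(r^{k}).
\]
We need $B\not\equiv0$. The kernel of $L$ is at most $m$-dimensional and contains the $m$ independent functions $A_jE_j$, so it is spanned by them. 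If $B\equiv0$, then $A_0=\sum c_jA_jE_j$ for constants $c_j$, and Lemma \ref{le2.4} (the $g_j=\alpha_jz^{k}$ differences are nonconstant and the coefficients are small) forces $A_0\equiv0$, which is a contradiction. Then
\[
\frac1\varphi=\frac1B\cdot\frac{L(\varphi)}{\varphi},
\]
and $\frac{L(\varphi)}{\varphi}$ is a combination of $\varphi^{(i)}/\varphi$ with coefficients of order $<k$. The logarithmic derivative lemma (note $\varphi$ has finite order, so there is no exceptional set) gives
\[
m\Big(r,\frac1\varphi\Big)\le m\Big(r,\frac1B\Big)+m\Big(r,\frac{L(\varphi)}{\varphi}\Big)=o(r^{k})+O(\log r).
\]
By (i) this is $S(r,{\rm e}^{z^{k}})$.
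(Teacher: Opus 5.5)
The paper quotes this lemma from its source without proof, so your argument has to stand on its own. Part (i) does. Choosing a \emph{nonzero} exposed point $\alpha_{j_0}$ of the hull of $\{0,\alpha_1,\dots,\alpha_m\}$ makes the dominant term have a nonvanishing coefficient and also dominate $A_0$. The two-sided bound ${\rm e}^{-r^{k-\epsilon}}\le|A_j(z)|\le{\rm e}^{r^{k-\epsilon}}$ on $|z|=r\notin E$, with $E$ of finite logarithmic measure, is standard for meromorphic functions of order $<k$. Choosing $r'\in[r/2,r]\setminus E$ then legitimately removes $E$.

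The gap is in part (ii), in the claim that $L=L_1\circ\cdots\circ L_m$ annihilates every $A_iE_i$. Write $L_j=D-c_j$ with $c_j=A_j'/A_j+k\alpha_jz^{k-1}$. The ``diagonal action'' only shows $L(A_iE_i)=\tilde B_iE_i$ with $\tilde B_i$ small. This vanishes for $i=m$, the innermost factor, but not in general for $i<m$, because the factors do not commute: $L_iL_j-L_jL_i$ is multiplication by $c_i'-c_j'$. For $m=2$ one computes
\[
L_1L_2(A_1E_1)=A_1\,(c_1-c_2)'\,E_1 .
\]
This vanishes only if $A_1/A_2=C{\rm e}^{az-(\alpha_1-\alpha_2)z^{k}}$. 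Given order $<k$, that forces $k=1$ and $A_1/A_2$ constant. For example, $k=1$, $A_1=z$, $A_2=1$ gives $L_1L_2(zE_1)=-E_1/z\not\equiv 0$. Consequently:
\begin{itemize}
\item $L(\varphi)=L(A_0)+\sum_{i<m}\tilde B_iE_i$ is not a small function;
\item your kernel argument for $B\not\equiv0$ rests on a false premise;
\item the factorization $1/\varphi=(1/B)\cdot L(\varphi)/\varphi$ no longer gives $m(r,1/B)=o(r^{k})$.
\end{itemize}
The step you flagged as delicate, minimum-modulus control in (i), is actually fine; this is the one that fails.

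The fix is to use an operator whose kernel really is $\mathrm{span}\{A_1E_1,\dots,A_mE_m\}$, for instance
\[
L(g)=\frac{W(A_1E_1,\dots,A_mE_m,g)}{W(A_1E_1,\dots,A_mE_m)} .
\]
Since $(A_jE_j)^{(i)}=P_{ij}E_j$, where $P_{ij}$ is a differential polynomial in $A_j$ with polynomial coefficients, the common factor $E_1\cdots E_m$ cancels. The coefficients of $L$ are then minors of $(P_{ij})$ divided by $\det(P_{ij})$, so they have order $<k$. Moreover $\det(P_{ij})\not\equiv0$, because the $A_jE_j$ are linearly independent over $\mathbb{C}$ by Lemma \ref{le2.4}.

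Equivalently, you can compose inductively: set $L^{(j)}=\big(D-B_j'/B_j-k\alpha_jz^{k-1}\big)\circ L^{(j-1)}$, where $L^{(j-1)}(A_jE_j)=B_jE_j$. Here $B_j\not\equiv0$ by the same independence together with $\dim\ker L^{(j-1)}\le j-1$.

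With either corrected $L$, the rest of your part (ii) goes through verbatim: $B=L(A_0)\not\equiv0$ via Lemma \ref{le2.4}, then the factorization of $1/\varphi$, then the logarithmic derivative lemma for the finite-order function $\varphi$ with no exceptional set.
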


\begin{lemma}(\cite[Lemma~6]{LP})\label{le2.8}
Suppose that $f$ is a transcendental meromorphic function, $a,b,c,d$ are small functions of $f$
with $acd\not\equiv 0$. If
\begin{align*}
af^{2}+bff'+c(f')^{2}=d,
\end{align*}
then
\begin{align*}
c(b^{2}-4ac)\frac{d'}{d}+b(b^{2}-4ac)-c(b^{2}-4ac)'+(b^{2}-4ac)c'=0.
\end{align*}
\end{lemma}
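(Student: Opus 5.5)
The plan is to reduce the quadratic relation to a Riccati-type equation for a logarithmic-derivative quantity and show that its free term vanishes. Put $\Delta=b^{2}-4ac$; if $\Delta\equiv 0$ the asserted identity holds trivially, so assume $\Delta\not\equiv0$. Complete the square with $u=2cf'+bf$. Multiplying the equation by $4c$ gives
\begin{equation*}
u^{2}-\Delta f^{2}=4cd .
\end{equation*}
Set $w=u/f=2c\frac{f'}{f}+b$, so that $f^{2}(w^{2}-\Delta)=4cd$. Here $w^{2}-\Delta\not\equiv0$, since otherwise $cd\equiv0$.

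\textbf{Step 1 (Riccati equation).} Logarithmically differentiate $f^{2}(w^{2}-\Delta)=4cd$, write $D=(cd)'/(cd)$, and use $f'/f=(w-b)/(2c)$. This should give
\begin{equation*}
2ww'+\frac{w^{3}}{c}-\Big(\frac{b}{c}+D\Big)w^{2}-\frac{\Delta}{c}w+K=0,
\qquad K:=\frac{b\Delta}{c}-\Delta'+\Big(\frac{c'}{c}+\frac{d'}{d}\Big)\Delta .
\end{equation*}
Multiplying $K$ by $c$ gives exactly $c(b^{2}-4ac)\frac{d'}{d}+b(b^{2}-4ac)-c(b^{2}-4ac)'+(b^{2}-4ac)c'$. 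So it suffices to prove $K\equiv0$, and I argue by contradiction assuming $K\not\equiv0$. (As a sanity check, $f=\sin z$ with $a=c=d=1$, $b=0$ satisfies $f^{2}+f'^{2}=1$; there $K\equiv 0$ while $w=2\cot z$ is not small. So the contradiction must come from $K\neq0$, not from properties of $f$ alone.)

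\textbf{Step 2 ($w$ is small if $K\not\equiv0$).} Let $z_0$ be a zero of $w$ of order $m$ that is not a zero or pole of any coefficient. In the Riccati equation the terms $2ww'$, $w^{3}/c$, $w^{2}$ and $w$ all vanish at $z_0$, while $K(z_0)\ne0$, which is impossible. Hence $\overline N(r,1/w)=S(r,f)$.

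For the proximity function, divide by $w^{2}$:
\begin{equation*}
\frac{K}{w^{2}}=-2\frac{w'}{w}-\frac{w}{c}+\frac{b}{c}+D+\frac{\Delta}{c}\cdot\frac1w .
\end{equation*}
On the arc where $|w|\le1$, bound $|1/w|^{2}$ by $|1/K|$ times the right-hand side; the last term is at most $|\Delta/c|\,|1/w|$, so it can be absorbed. Taking logarithms and using the lemma of the logarithmic derivative should give $m(r,1/w)=S(r,f)$, since $T(r,w)\le 2T(r,f)+S(r,f)$. Together with the zero count this gives $T(r,w)=T(r,1/w)+O(1)=S(r,f)$.

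\textbf{Step 3 (contradiction).} With $w$ small, $f^{2}=4cd/(w^{2}-\Delta)$ expresses $f^{2}$ as a quotient of small functions, so $2T(r,f)=S(r,f)$. This is impossible for transcendental $f$. Therefore $K\equiv0$, which is the claimed identity.

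The main obstacle is Step 2, specifically $m(r,1/w)=S(r,f)$. The presence of the term $\frac{\Delta}{c}\frac1w$ means one must do the Clunie-type absorption carefully on the set where $|w|$ is small. If that absorption fails, the fallback is to handle it through $m(r,1/w)\le m(r,K/w^{2})+S(r,f)$ together with the Riccati identity.
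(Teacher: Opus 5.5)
Your argument is correct, and it takes a different route from the one behind this lemma. The paper gives no proof here: it quotes \cite{LP} and remarks only that $cd\not\equiv0$ suffices. The standard argument, which the authors replay in Subcase~2.2 of the proof of Theorem~\ref{th1.2}, works with $f$ itself:
\begin{itemize}
\item Differentiate, and subtract $\frac{d'}{d}$ times the equation to remove $d$.
\item At a simple zero of $f$ the factor $(b+c'-c\frac{d'}{d})f'+2cf''$ must vanish, so $\phi=\big[(b+c'-c\frac{d'}{d})f'+2cf''\big]/f$ is small. Hence $f''$ is a combination of $f,f'$ with small coefficients.
\item Substituting back gives $A_1f+A_2f'\equiv0$ with
\[
A_1=a'-a\frac{d'}{d}+\frac{b\phi}{2c},\qquad
A_2=\phi+2a+b'-b\frac{d'}{d}-\frac{b}{2c}\Big(b+c'-c\frac{d'}{d}\Big).
\]
\item Transcendence forces $A_1\equiv A_2\equiv0$, and eliminating $\phi$ between these two relations yields exactly the identity.
\end{itemize}
That route needs only the logarithmic derivative lemma and a count of the poles of $\phi$. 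Yours replaces the auxiliary-function step by a Clunie-type estimate, but it is more transparent. It shows the identity is precisely the vanishing of the free term $K$ of the Riccati equation for $w=2cf'/f+b$, equivalently $(cd/\Delta)'/(cd/\Delta)=-b/c$. It also makes visible why $a\not\equiv0$ plays no role.

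Two details need tightening.

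First, Step~2 must bound $N(r,1/w)$, not only $\overline{N}(r,1/w)$. Let $\operatorname{ord}$ denote the order of vanishing, negative at poles, and omit any term whose coefficient vanishes identically. Consider a zero of $w$ of order $m$ at a zero or pole of a coefficient. Every term other than $K$ then has order at least $m-M$, where $M=\max\{0,\operatorname{ord}c,-\operatorname{ord}(b/c+D),-\operatorname{ord}(\Delta/c)\}$. So the equation forces $m\le\operatorname{ord}K+M$. These multiplicities are therefore dominated by those of the zeros and poles of the coefficients, which gives $N(r,1/w)=S(r,f)$.

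Second, the absorption has to be done on $\{\theta:|w(re^{i\theta})|<1\}$, where $|w/c|\le|1/c|$. With $X=|1/w|$ you get $X^2\le A+BX$, where $A=|1/K|\,(2|w'/w|+|1/c|+|b/c+D|)$ and $B=|\Delta/(cK)|$. Hence $\log^+X\le\log^+B+\frac12\log^+A+\log2$, which integrates to $m(r,1/w)=S(r,f)$, using $T(r,w)\le2T(r,f)+S(r,f)$ for the logarithmic derivative term. Your fallback over the whole circle would not close: it brings in $m(r,w/c)$ and only gives $m(r,1/w)\le m(r,w)+S(r,f)$.
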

\begin{remark}\label{re2.2}
The condition $acd\not\equiv 0$ in Lemma \ref{le2.8} is not necessary and it can be
replaced by $cd\not\equiv 0$. Please check the proof of Lemma $6$ in \cite{LP}.
\end{remark}

\section{ Proof of Theorem \ref{th1.1}}
\begin{proof}
Let $f$ be a meromorphic solution of \eqref{eq1.6}.
Since $p_1{\rm e}^{\alpha_1z^{k}}+p_2{\rm e}^{\alpha_2z^{k}}$ is transcendental,
then every finite order meromorphic solution $f$ of \eqref{eq1.6} is
also transcendental. Note that
$N(r,p_1{\rm e}^{\alpha_1z^{k}}+p_2{\rm e}^{\alpha_2z^{k}})=S(r,{\rm e}^{z^{k}})$
and $\sigma(p_1{\rm e}^{\alpha_1z^{k}}+p_2{\rm e}^{\alpha_2z^{k}})=k$,
by Lemma \ref{le2.1} and the argument therein, we have
\begin{equation}\label{eq3.1}
S(r,f)=S(r,{\rm e}^{z^{k}}),\quad N(r,f)=S(r,f),\quad \sigma(f)=k.
\end{equation}
\indent Set $F(z)=f^n(z)f'(z)$, $P(z)=P_d(z,f)$. Differentiating \eqref{eq1.6} yields
\begin{equation}\label{eq3.2}
F'+P'=(\alpha_1kz^{k-1}p_1+p'_1){\rm e}^{\alpha_1z^{k}}+(\alpha_2kz^{k-1}p_2+p'_2){\rm e}^{\alpha_2z^{k}}.
\end{equation}
Eliminating ${\rm e}^{\alpha_{1}z^{k}}$ or ${\rm e}^{\alpha_{2}z^{k}}$ from \eqref{eq1.6} and \eqref{eq3.2}
respectively yields
\begin{equation}\label{eq3.3}
p_1(F'+P')-(\alpha_1kz^{k-1}p_1+p'_1)(F+P)=W{\rm e}^{\alpha_2z^{k}},
\end{equation}
and
\begin{equation}\label{eq3.4}
p_2(F'+P')-(\alpha_2kz^{k-1}p_2+p'_2)(F+P)=-We^{\alpha_1z^{k}},
\end{equation}
where
\begin{equation}\label{eq3.5}
W=p_1p'_2-p'_1p_2+(\alpha_2-\alpha_1)kz^{k-1}p_1p_2,
\end{equation}
and $W\not\equiv 0$. Otherwise, $W\equiv 0$, then $p_2/p_1=c_1{\rm e}^{(\alpha_1-\alpha_2)z^{k}}$,
$c_1\in \mathbb{C}\setminus\{0\}$. Note that $T(r,p_j)=S(r,{\rm e}^{z^{k}})~(j=1,2)$,
then $\alpha_1=\alpha_2$, a contradiction.
Differentiating \eqref{eq3.3} and eliminating ${\rm e}^{\alpha_{2}z^{k}}$, we obtain
\begin{equation}\label{eq3.6}
W_1F''+W_2F'+W_3F=-(W_1P''+W_2P'+W_3P),
\end{equation}
where
\begin{equation}\label{eq3.7}
\left\{\begin{array}{l}
W_1=p_1W,\\
W_2=-(\alpha_1+\alpha_2)kz^{-1}p_1W-p_1W',\\
W_3=-[(p'_1+\alpha_1kz^{k-1}p_1)'-\alpha_2kz^{k-1}(p'_1+\alpha_1kz^{k-1}p_1)]W+(p'_1+\alpha_1kz^{k-1}p_1)W',
\end{array}\right.
\end{equation}
$T(r,W_{j})=S(r,f)~(j=1,2,3)$ and $W_3\not\equiv 0$. Otherwise, $W_3\equiv 0$, then
\begin{equation*}
\alpha_2kz^{k-1}=\frac{(p'_1+\alpha_1kz^{k-1}p_1)'}{p'_1+\alpha_1kz^{k-1}p_1}-\frac{W'}{W},
\end{equation*}
which implies that
\begin{equation*}
T(r,{\rm e}^{\alpha_2z^{k}})
=T\Big(r,\frac{p'_1+\alpha_1kz^{k-1}p_1}{W}\Big)=S(r,f),
\end{equation*}
a contradiction.\par
Now we consider the following two cases:\par
{\bf Case 1:}\ \ Suppose that $W_1F''+W_2F'+W_3F\equiv 0$. Substituting (\ref{eq3.7}) into this identity yields
\begin{align}\label{eq3.8}
\begin{array}{l}
p_1WF''-[(\alpha_1+\alpha_2)kz^{k-1}p_1W+p_1W']F'-\\
\{[(p'_1+\alpha_1kz^{k-1}p_1)'-\alpha_2kz^{k-1}(p'_1+\alpha_1kz^{k-1}p_1)]W
-(p'_1+\alpha_1kz^{k-1}p_{1})W'\}F\equiv 0.
\end{array}
\end{align}
Since
\begin{align}\label{eq3.9}
\begin{array}{l}
Wp_1[(p'_1+\alpha_1kz^{k-1}p_1)'+\alpha_1kz^{k-1}(p'_1+\alpha_1kz^{k-1}p_1)]\\
-Wp_1\big[(\alpha_1+\alpha_2)kz^{k-1}+\frac{W'}{W}\big](p'_1+\alpha_1kz^{k-1}p_1)\\
-Wp_1\big\{[(p'_1+\alpha_1kz^{k-1}p_1)'-\alpha_2kz^{k-1}(p'_1+\alpha_1kz^{k-1}p_1)]
-(p'_1+\alpha_1kz^{k-1}p_1)\frac{W'}{W}\big\}\equiv 0.
\end{array}
\end{align}
It follows from \eqref{eq3.5} that
\begin{align}\label{eq3.10}
\begin{array}{ll}
Wp_1[(p'_2+\alpha_2kz^{k-1}p_2)'+\alpha_2kz^{k-1}(p'_2+\alpha_2kz^{k-1}p_2)]\\
-Wp_1\big[(\alpha_1+\alpha_2)kz^{k-1}+\frac{W'}{W}\big](p'_2+\alpha_2kz^{k-1}p_2)\\
-Wp_2\big\{[(p'_1+\alpha_1kz^{k-1}p_1)'-\alpha_2kz^{k-1}(p'_1+\alpha_1kz^{k-1}p_1)]
-(p'_1+\alpha_1kz^{k-1}p_1)\frac{W'}{W}\big\}\\
=W[p_1(p'_2+\alpha_2kz^{k-1}p_2)'+\alpha_2kz^{k-1}p_1(p'_2+\alpha_2kz^{k-1}p_2)]\\
-W\big[(\alpha_1+\alpha_2)kz^{k-1}+\frac{W'}{W}\big]p_1(p'_2+\alpha_2kz^{k-1}p_2)\\
-W\big\{[p_2(p'_1+\alpha_1kz^{k-1}p_1)'-\alpha_2kz^{k-1}p_2(p'_1+\alpha_1kz^{k-1}p_1)]
-p_2(p'_1+\alpha_1kz^{k-1}p_1)\frac{W'}{W}\big\}\\
=W\big\{p_1(p'_2+\alpha_2kz^{k-1}p_2)'-\alpha_1kz^{k-1}p_1p'_2
-p_2(p'_1+\alpha_1kz^{k-1}p_1)'+\alpha_2kz^{k-1}p'_1p_2\\
-[-p'_1p_2+p_1p'_2+(\alpha_2-\alpha_1)kz^{k-1}p_1p_2]\frac{W'}{W}\big\}\\
=W\big(W'-W\cdot\frac{W'}{W}\big)\equiv 0.
\end{array}
\end{align}
It follows from \eqref{eq3.9} and \eqref{eq3.10} that $p_1(z){\rm e}^{\alpha_1z^{k}}$
and $p_2(z){\rm e}^{\alpha_2z^{k}}$ are two linearly independent solutions of \eqref{eq3.8}.
Then the general solution of \eqref{eq3.8} is of the form
\begin{equation}\label{eq3.11}
F(z)=\eta_1p_1(z){\rm e}^{\alpha_1z^{k}}+\eta_2p_2(z){\rm e}^{\alpha_2z^{k}},
\end{equation}
where $\eta_1,\eta_2$ are constants. Since $F(z)=f^{n}(z)f'(z)$, by Lemma \ref{le2.3},
we have
\begin{equation}\label{eq3.12}
f^n(z)f'(z)=\eta_1p_1(z){\rm e}^{\alpha_1z^{k}}\quad\text{or}\quad
f^n(z)f'(z)=\eta_2p_2(z){\rm e}^{\alpha_2z^{k}}.
\end{equation}
Note that $n\geq 2$, it follows from \eqref{eq3.12} that any zero or pole of $f$ must also be
a zero or pole of $p_1$ (or $p_2$), which implies that $N(r,f)+N(r,1/f)\leq N(r,p_j)+N(r,1/p_j)=S(r,f),~j=1,2$.
If $f^n(z)f'(z)=\eta_1p_1(z){\rm e}^{\alpha_1z^{k}}$, then
\begin{equation*}
N\Bigg(r,\frac{\eta_1p_1{\rm e}^{\alpha_1z^{k}}}{f^{n+1}}\Bigg)
=N\Bigg(r,\frac{f^{n}f'}{f^{n+1}}\Bigg)=\overline{N}(r,f)+\overline{N}\Big(r,\frac{1}{f}\Big)=S(r,f),
\end{equation*}
\begin{equation*}
m\Bigg(r,\frac{\eta_1p_1{\rm e}^{\alpha_1z^{k}}}{f^{n+1}}\Bigg)
=m\Bigg(r,\frac{f^{n}f'}{f^{n+1}}\Bigg)=S(r,f),
\end{equation*}
consequently, $T\Big(r,\frac{\eta_1p_1{\rm e}^{\alpha_1z^{k}}}{f^{n+1}}\Big)=S(r,f)$.
Set $g(z)=\frac{f^{n+1}(z)}{\eta_1p_1(z){\rm e}^{\alpha_1z^{k}}}$, then $T(r,g)=S(r,f)$,
whence $f^{n+1}(z)=\eta_1p_1(z)g(z){\rm e}^{\alpha_1z^{k}}$,
$f(z)=\gamma_1(z){\rm e}^{\frac{\alpha_1}{n+1}z^{k}}$. Substituting these expressions into
\eqref{eq1.6}, we get
\begin{equation}\label{eq3.13}
\eta_1p_1(z){\rm e}^{\alpha_1z^{k}}+\sum\limits_{j=0}^d\Psi_j(z){\rm e}^{\frac{j\alpha_1}{n+1}z^{k}}
=p_1(z){\rm e}^{\alpha_1z^{k}}+p_2(z){\rm e}^{\alpha_2z^{k}},
\end{equation}
where $T(r,\Psi_j(z))=S(r,f)(j=0,1,2,\ldots,d)$. By \eqref{eq3.13} and Lemma \ref{le2.4}, we obtain
$\eta_1=1,\Psi_0(z)=P_d(z,0)\equiv 0$. Moreover, $P_d(z,f)\equiv p_2(z){\rm e}^{\alpha_2z^{k}}$
and there is an integer $t\in\{1,2,\ldots,d\}$ such that
$\frac{t\alpha_1}{n+1}=\alpha_2$; equivalently, $\alpha_{1}/\alpha_{2}=(n+1)/t$.
Consequently, $\gamma_1^n\left(\gamma'_1+\frac{\alpha_1}{n+1}kz^{k-1}\gamma_1\right)=p_1$.\par

Similarly, if $f^n(z)f'(z)=\eta_2p_2(z){\rm e}^{\alpha_2z^{k}}$, then we obtain
$P_d(z,0)\equiv 0$ and $P_d(z,f)\equiv p_1(z){\rm e}^{\alpha_1z^{k}}$, and there is an integer
$t\in\{1,2,\ldots,d\}$ such that $\alpha_{1}/\alpha_{2}=t/(n+1)$.
Consequently, $\gamma_2^n\left(\gamma'_2+\frac{\alpha_2}{n+1}kz^{k-1}\gamma_2\right)=p_2$.\par

{\bf Case 2:}\ \ Suppose that $W_1F''+W_2F'+W_3F\not\equiv 0$. Then \eqref{eq3.6} can be
rewritten as
\begin{equation}\label{eq3.14}
f^{n-1}(z)\psi(z,f)=-(W_1P''+W_2P'+W_3P),
\end{equation}
where
\begin{equation}\label{eq3.15}
\psi(z,f)=W_1\Big[n(n-1)\frac{f'}{f}(f')^2+3nf'f''+ff'''\Big]+W_2[n(f')^2+ff'']+W_3ff'.
\end{equation}
By \eqref{eq3.14}, \eqref{eq3.15} and Lemma \ref{le2.5}, we obtain
\begin{equation}\label{eq3.16}
m(r,\psi(z,f))=S(r,f).
\end{equation}
Combining \eqref{eq1.6} with \eqref{eq3.15} and \eqref{eq3.16}, we obtain
\begin{align*}
  2T(r,f)=T(r,f^2) &\leq m(r,\psi)+m\Big(r,\frac{f^2}{\psi}\Big)+S(r,f) \\
  &\leq N\Big(r,\frac{\psi}{f^2}\Big)+S(r,f) \\
  &\leq 3\overline{N}\Big(r,\frac{1}{f}\Big)+S(r,f),
\end{align*}
which implies that
\begin{equation*}
T(r,f)\leq\frac{3}{2}\overline{N}\left(r,\frac{1}{f}\right)+S(r,f).
\end{equation*}
\end{proof}

\section{ Proof of Theorem \ref{th1.2}}
\begin{proof}
Suppose that $f$ is a meromorphic solution of \eqref{eq1.6} of finite order.
The same argument as in the proof of Theorem \ref{th1.1} yields \eqref{eq3.1}.
Set $P:=P_d(z,f)$. Differentiating \eqref{eq1.6} results in
\begin{equation}\label{eq4.1}
nf^{n-1}(f')^{2}+f^{n}f''+P'=(p'_1+\alpha_{1}kz^{k-1}p_1){\rm e}^{\alpha_{1}z^{k}}
+(p'_2+\alpha_{2}kz^{k-1}p_2){\rm e}^{\alpha_{2}z^{k}}.
\end{equation}
Eliminating ${\rm e}^{\alpha_1z^{k}}$ or ${\rm e}^{\alpha_2z^{k}}$ from \eqref{eq1.6} and \eqref{eq4.1},
we obtain
\begin{equation}\label{eq4.2}
p_1[nf^{n-1}(f')^{2}+f^{n}f''+P']-(p'_1+\alpha_{1}kz^{k-1}p_1)(f^{n}f'+P)=A{\rm e}^{\alpha_{2}z^{k}},
\end{equation}
\begin{equation}\label{eq4.3}
p_2[nf^{n-1}(f')^{2}+f^{n}f''+P']-(p'_2+\alpha_{2}kz^{k-1}p_2)(f^{n}f'+P)=-A{\rm e}^{\alpha_{1}z^{k}},
\end{equation}
where $A=p_1p'_2-p'_1p_2+(\alpha_2-\alpha_1)kz^{k-1}p_1p_2$ and $A\not\equiv 0$.
Otherwise, $A\equiv 0$, then $p_2/p_1=c_1{\rm e}^{(\alpha_1-\alpha_2)z^{k}}$,
$c_1\in \mathbb{C}\setminus\{0\}$. Note that $T(r,p_j)=S(r,{\rm e}^{z^{k}})~(j=1,2)$,
then $\alpha_1=\alpha_2$, a contradiction.
Let $\beta_j=\sum\limits_{|\mu|=j}\Big(a_{\mu}(z)\prod\limits_{t=1}^s
\Big(\frac{f(z+d_t)}{f(z)}\Big)^{\mu_t}\Big)$,
it follows from Lemma \ref{le2.6} that
\begin{equation}\label{eq4.4}
\sigma(\beta_j)\leq k, \quad m(r,\beta_j)=S(r,f), \quad P_d(z,f)=\sum\limits_{j=0}^d\beta_jf^j(z).
\end{equation}
By \eqref{eq1.6} and \eqref{eq4.4}, we get
\begin{equation}\label{eq4.5}
\frac{1}{p_1{\rm e}^{\alpha_{1}z^{k}}+p_2{\rm e}^{\alpha_{2}z^{k}}-P_d(z,0)}\Bigg(\frac{f'}{f}
+\sum\limits_{j=1}^d\frac{\beta_{j}}{f^{n+1-j}}\Bigg)
=\Big(\frac{1}{f}\Big)^{n+1}.
\end{equation}
Since $P_d(z,0)\not\equiv 0$, it follows from Lemma \ref{le2.7} that
\begin{equation*}
\begin{split}
& m\Big(r,\frac{1}{p_1{\rm e}^{\alpha_{1}z^{k}}+p_2{\rm e}^{\alpha_{2}z^{k}}-P_d(z,0)}\Big)
=S(r,{\rm e}^{z^{k}}),\\
& m\Big(r,\frac{{\rm e}^{\alpha_{i}z^{k}}}{p_1{\rm e}^{\alpha_{1}z^{k}}+p_2{\rm e}^{\alpha_{2}z^{k}}-P_d(z,0)}\Big)
=S(r,{\rm e}^{z^{k}}),\quad i=1,2.
\end{split}
\end{equation*}
Combining \eqref{eq4.4}, \eqref{eq4.5}, the above two relations,
and the lemma on logarithmic derivatives, we have
\begin{equation}\label{eq4.6}
m\Big(r,\frac{1}{f}\Big)=S(r,{\rm e}^{z^{k}}), \quad
m\Big(r,\frac{{\rm e}^{\alpha_{i}z^{k}}}{f^{n+1}}\Big)=S(r,{\rm e}^{z^{k}}), \quad i=1,2.
\end{equation}
If $\frac{\alpha_{2}}{\alpha_{1}}=\frac{t}{n+1}$, $t\in\{1,2,\ldots,n-1\}$,
and the fact that
\begin{align*}
\Bigg|
\frac{{\rm e}^{\alpha_2z^{k}}}{f^{n-1}}\Bigg|=\Bigg|\frac{{\rm e}^{\frac{t}{n+1}\alpha_1z^{k}}
}{f^{n-1}}\Bigg|=\Bigg|\frac{{\rm e}^{\alpha_1z^{k}}}{f^{n+1}}\Bigg|^{\frac{t}{n+1}}\cdot\frac{1}{|f|^{n-1-t}},
\end{align*}
then it follows from \eqref{eq4.6} that
\begin{equation}\label{eq4.7}
m\Bigg(r,\frac{{\rm e}^{\alpha_2z^{k}}}{f^{n-1}}\Bigg)=S(r,{\rm e}^{z^{k}}).
\end{equation}
Applying \eqref{eq3.1}, \eqref{eq4.2}, \eqref{eq4.6}, \eqref{eq4.7} and Lemma \ref{le2.5},
Remark \ref{re2.3}, we conclude that
\begin{equation}\label{eq4.8}
m(r,\psi)=S(r,f),
\end{equation}
where
\begin{equation}\label{eq4.9}
\psi=np_1(f')^2+p_1ff''-(p'_1+\alpha_1kz^{k-1}p_1)ff'.
\end{equation}
Now we consider the following two cases:\par
{\bf Case 1:}\ \ $\psi \equiv 0$. Then $f^n(z)f'(z)=c_2p_1(z){\rm e}^{\alpha_1z^{k}}$,
$c_2\in \mathbb{C}\setminus\{0\}$, since $n\geq 2$, which implies that
any zero or pole of $f$ must also be a zero or pole of $p_1$.
Consequently, $N(r,f)+N(r,1/f)\leq N(r,p_1)+N(r,1/p_1)=S(r,f)$.
Since
\begin{equation*}
N\Big(r,\frac{c_2p_1(z){\rm e}^{\alpha_1z^{k}}}{f^{n+1}}\Big)
=N\Big(r,\frac{f^{n}f'}{f^{n+1}}\Big)=\overline{N}(r,f)+\overline{N}\Big(r,\frac{1}{f}\Big)=S(r,f),
\end{equation*}
\begin{equation*}
m\Big(r,\frac{c_2p_1(z){\rm e}^{\alpha_1z^{k}}}{f^{n+1}}\Big)
=m\Big(r,\frac{f^{n}f'}{f^{n+1}}\Big)=S(r,f),
\end{equation*}
which implies that $T\Big(r,\frac{c_2p_1{\rm e}^{\alpha_1z^{k}}}{f^{n+1}}\Big)=S(r,f)$.
Set $h(z)=\frac{f^{n+1}(z)}{\eta_1p_1(z){\rm e}^{\alpha_1z^{k}}}$, then $T(r,h)=S(r,f)$,
whence $f^{n+1}(z)=c_2p_1(z)h(z){\rm e}^{\alpha_1z^{k}}$,
$f(z)=\gamma_1(z){\rm e}^{\frac{\alpha_1}{n+1}z^{k}}$. Substituting these expressions into
\eqref{eq1.6}, we have
\begin{equation*}
c_2p_1(z){\rm e}^{\alpha_1z^{k}}+\beta_0(z)+
\sum\limits_{j=1}^d\beta_j(z)\gamma_1^{j}(z){\rm e}^{\frac{j\alpha_1}{n+1}z^{k}}
=p_1(z){\rm e}^{\alpha_1z^{k}}+p_2(z){\rm e}^{\alpha_2z^{k}}.
\end{equation*}
Combining the above equation and Lemma \ref{le2.4}, we get
$\beta_0(z)=P_d(z,0)\equiv 0$, a contradiction.\par

{\bf Case 2:}\ \ $\psi\not\equiv 0$. If $z_0$ is a multiple zero of $f$, then $z_0$ must be a
zero of $\psi$, by \eqref{eq4.8} and \eqref{eq4.9}, we have
$N_{(2}(r,\frac{1}{f})\leq N(r,\frac{1}{\psi})\leq T(r,\frac{1}{\psi})=S(r,f)$.
It follows from \eqref{eq3.1} and \eqref{eq4.6} that
\begin{align*}
T(r,f)=N_{1)}\Big(r,\frac{1}{f}\Big)+S(r,f).
\end{align*}
Differentiating \eqref{eq4.9} results in
\begin{align}\label{eq4.10}
\begin{array}{l}
\psi'=[(n-1)p'_1-\alpha_1kz^{k-1}p_1](f')^2+(2n+1)p_1f'f''-(p'_1+\alpha_1kz^{k-1}p_1)'ff'\\
-\alpha_1kz^{k-1}p_1ff''+p_1ff'''.
\end{array}
\end{align}
Combining \eqref{eq4.9} and \eqref{eq4.10}, we obtain
\begin{align}\label{eq4.11}
\begin{split}
& [((n-1)p'_1-\alpha_1kz^{k-1}p_1)\psi-np_1\psi'](f')^2+(2n+1)p_1\psi f'f''+\\
& [(p'_1+\alpha_1kz^{k-1}p_1)\psi'-(p'_1+\alpha_1kz^{k-1}p_1)'\psi]ff'
-(p_1\psi'+\alpha_1kz^{k-1}p_1\psi)ff''+p_1\psi ff'''=0.
\end{split}
\end{align}
Suppose that $z_1$ is a simple zero of $f$ which is not the zero of coefficients of \eqref{eq4.11}.
By \eqref{eq4.11}, we see that $z_1$ is a zero of
$[((n-1)p'_1-\alpha_1kz^{k-1}p_1)\psi-np_1\psi']f'+(2n+1)p_1\psi f''$.
Set
\begin{equation}\label{eq4.12}
\beta=\frac{[((n-1)p'_1-\alpha_1kz^{k-1}p_1)\psi-np_1\psi']f'+(2n+1)p_1\psi f''}{f}.
\end{equation}
Then $T(r,\beta)=S(r,f)$. It follows from \eqref{eq4.12} that
\begin{equation}\label{eq4.13}
f''=\frac{1}{2n+1}\left[n\frac{\psi'}{\psi}+\alpha_1kz^{k-1}-(n-1)\frac{p'_1}{p_1}\right]f'
+\frac{\beta}{(2n+1)p_1\psi}f.
\end{equation}
Substituting \eqref{eq4.13} into \eqref{eq4.9}, we have
\begin{equation}\label{eq4.14}
af^2+bff'+c(f')^2=\psi,
\end{equation}
where
\begin{equation}\label{eq4.15}
\left\{\begin{array}{l}
a=\frac{\beta}{(2n+1)\psi},\\
b=\frac{np_1}{2n+1}\left(\frac{\psi'}{\psi}-2\alpha_1kz^{k-1}-3\frac{p'_1}{p_1}\right),\\
c=np_1.
\end{array}\right.
\end{equation}
From \eqref{eq4.15} and Lemma \ref{le2.8}, we conclude that
\begin{equation}\label{eq4.16}
  c(b^2-4ac)\frac{\psi'}{\psi}=c(b^2-4ac)'-b(b^2-4ac)-c'(b^2-4ac).
\end{equation}
Next we consider two subcases as follows.\par
{\bf Subcase 2.1:}\ \ If $b^2-4ac\not\equiv 0$. Combining \eqref{eq4.15} and \eqref{eq4.16},
we get
\begin{equation}\label{eq4.17}
  2\alpha_1kz^{k-1}=2(n+1)\frac{\psi'}{\psi}-(2n+1)\frac{(b^2-4ac)'}{b^2-4ac}-2(n-1)\frac{p'_1}{p_1}.
\end{equation}
Integrating both sides of \eqref{eq4.17} gives
\begin{align*}
{\rm e}^{2\alpha_1 z^{k}}=c_3\psi^{2(n+1)}\cdot (b^2-4ac)^{-(2n+1)}\cdot p_1^{-2(n-1)},\quad
c_3\in \mathbb{C}\setminus\{0\},
\end{align*}
which implies that $T(r,{\rm e}^{2\alpha_1 z^{k}})=S(r,f)$, a contradiction.\par

{\bf Subcase 2.2:}\ \ If $b^2-4ac\equiv 0$. Differentiating \eqref{eq4.14} gives
\begin{equation}\label{eq4.18}
\psi'=a'f^2+(2a+b')ff'+(b+c')(f')^2+bff''+2cf'f''.
\end{equation}
Suppose that $z_{2}$ is a simple zero of $f$ which is not the zero of $a,b,c$.
By \eqref{eq4.14} and \eqref{eq4.18}, we get $z_{2}$ is a zero of $[(b+c')\psi-c\psi']f'+2c\psi f''$.
Set
\begin{equation}\label{eq4.20}
\gamma=\frac{[(b+c')\psi-c\psi']f'+2c\psi f''}{f}.
\end{equation}
Then $T(r,\gamma)=S(r,f)$, and
\begin{equation}\label{eq4.20}
f''=\frac{1}{2}\left(\frac{\psi'}{\psi}-\frac{b+c'}{c}\right)f'+\frac{\gamma}{2c\psi}f.
\end{equation}
Substituting \eqref{eq4.20} into \eqref{eq4.18} yields
\begin{equation}\label{eq4.21}
\psi'=\tau_1f^2+\tau_2ff'+\tau_3(f')^2,
\end{equation}
where
\begin{align*}
\tau_1=a'+\frac{b\gamma}{2c\psi},
~~\tau_2=2a+b'+\frac{b}{2}\left(\frac{\psi'}{\psi}-\frac{b+c'}{c}\right)+\frac{\gamma}{\psi},
~~\tau_3=c\frac{\psi'}{\psi}.
\end{align*}
Eliminating $(f')^2$ from \eqref{eq4.14} and \eqref{eq4.21}, we get
\begin{equation}\label{eq4.22}
A_1f+A_2f'=0,
\end{equation}
where
\begin{align*}
A_1 &=\tau_1-a\frac{\psi'}{\psi}=a'+\frac{b\gamma}{2c\psi}-a\frac{\psi'}{\psi},\\
A_2 & =\tau_2-b\frac{\psi'}{\psi}=
2a+b'-\frac{b}{2}\left(\frac{\psi'}{\psi}+\frac{b+c'}{c}\right)+\frac{\gamma}{\psi},
\end{align*}
and $T(r,A_{j})=S(r,f)~j=1,2$. Suppose that $z_{3}$ is a simple zero of $f$
which is not the zero of $A_1,A_2$. By \eqref{eq4.22}, we deduce that $A_1=A_2\equiv 0$.
It follows from \eqref{eq4.20} that
\begin{equation}\label{eq4.23}
f''=\frac{1}{2}\left(\frac{\psi'}{\psi}-\frac{b+c'}{c}\right)f'-\frac{1}{b}\left(a'-a\frac{\psi'}{\psi}\right)f,
\end{equation}
where $b=\frac{np_1}{2n+1}\left(\frac{\psi'}{\psi}-2\alpha_1kz^{k-1}-3\frac{p'_1}{p_1}\right)\not\equiv 0$.
Otherwise, $b\equiv 0$, which implies that
${\rm e}^{2\alpha_1z^{k}}=c_4\psi\cdot p_1^{-3}, c_4\in \mathbb{C}\setminus\{0\}$,
then $T(r,{\rm e}^{2\alpha_1z^{k}})=S(r,f)$, a contradiction.
Substituting $b^2=4ac$ and \eqref{eq4.15} into \eqref{eq4.23} results in
\begin{align}\label{eq4.24}
\begin{array}{l}
f''=\frac{1}{2n+1}\Big[n\frac{\psi'}{\psi}+\alpha_1kz^{k-1}-(n-1)\frac{p'_1}{p_1}\Big]f'-\\
\frac{1}{4(2n+1)}\Big[\Big(\frac{\psi'}{\psi}-2\alpha_1kz^{k-1}-3\frac{p'_1}{p_1}\Big)
\Big(\frac{p'_1}{p_1}-\frac{\psi'}{\psi}\Big)
+2\Big(\frac{\psi'}{\psi}-2\alpha_1kz^{k-1}-3\frac{p'_1}{p_1}\Big)'\Big]f.
\end{array}
\end{align}
Combining \eqref{eq4.13} and \eqref{eq4.24}, we obtain
\begin{equation}\label{eq4.25}
\frac{\beta}{p_1\psi}=-\frac{1}{4}
\Big[\Big(\frac{\psi'}{\psi}-2\alpha_1kz^{k-1}-3\frac{p'_1}{p_1}\Big)
\Big(\frac{p'_1}{p_1}-\frac{\psi'}{\psi}\Big)
+2\Big(\frac{\psi'}{\psi}-2\alpha_1kz^{k-1}-3\frac{p'_1}{p_1}\Big)'\Big].
\end{equation}
Set $u=\frac{\psi'}{\psi}-2\alpha_1kz^{k-1}-3\frac{p'_1}{p_1}(\not\equiv 0)$, $v=\frac{p'_1}{p_1}-\frac{\psi'}{\psi}$.
Then \eqref{eq4.25} takes the form
\begin{equation}\label{eq4.26}
\frac{\beta}{\psi}=-\frac{p_1}{4}(uv+2u').
\end{equation}
By \eqref{eq4.15}, $b^2=4ac$ and $u=\frac{\psi'}{\psi}-2\alpha_1kz^{k-1}-3\frac{p'_1}{p_1}(\not\equiv 0)$, we get
\begin{equation}\label{eq4.27}
\frac{\beta}{\psi}=\frac{p_1}{4}\cdot\frac{nu^{2}}{2n+1}.
\end{equation}
It follows from \eqref{eq4.26}, \eqref{eq4.27}, and the expressions of $u$ and $v$ that
\begin{align}\label{eq4.28}
2n\alpha_1kz^{k-1}=2(2n+1)\frac{u'}{u}-(n+1)\frac{\psi'}{\psi}-(n-1)\frac{p'_1}{p_1}.
\end{align}
Integrating both sides of \eqref{eq4.28} gives
\begin{align*}
{\rm e}^{2n\alpha_1z^{k}}=c_5u^{2(2n+1)}\cdot \psi^{-(n+1)}\cdot p_1^{-(n-1)},\quad
c_5\in \mathbb{C}\setminus\{0\},
\end{align*}
consequently, $T(r,{\rm e}^{2n\alpha_1z^{k}})=S(r,f)$, a contradiction.\par

Similarly, if $\frac{\alpha_{2}}{\alpha_{1}}=\frac{n+1}{t}$ for some integer $t\in\{1,2,\ldots,n-1\}$,
then Eq.\eqref{eq1.6} admits no meromorphic solution of finite order. Thereby completing the proof.\par
\end{proof}

\section{ Proof of Theorem \ref{th1.3}}
\begin{proof}
Suppose that $f$ is a meromorphic solution of \eqref{eq1.9} of finite order.
We first show that $f$ is entire. If not, $f$ has a pole $z_1$ of
multiplicity $k_1(\geq 1)$. By \eqref{eq1.9} and the hypotheses of Theorem \ref{th1.3},
we may choose $\nu\in\{1,2,\ldots,s\}$ such that $z_2:=z_1+d_{\nu}$ is a pole of
$f(z)$ of multiplicity $k_2>\frac{n+1}{n-1}k_1$. Continuing inductively,
we obtain a sequence $\{z_j\}$ of poles of $f(z)$ whose multiplicities satisfy
$k_j>\frac{n+1}{n-1}k_{j-1}~(j=2,3,\ldots)$. Since $\frac{n+1}{n-1}>1$,
this gives $\lambda\left(\frac{1}{f}\right)=+\infty$.
This contradicts $\sigma(f)<+\infty$. Hence $f$ is entire.
The function $p_{1}{\rm e}^{\lambda z}+p_{2}{\rm e}^{-\lambda z}$
is transcendental, so every meromorphic solution $f$ of \eqref{eq1.9} of finite order is
transcendental as well. Moreover $N(r,p_{1}{\rm e}^{\lambda z}+p_{2}{\rm e}^{-\lambda z})=0$ and
$\sigma(p_{1}{\rm e}^{\lambda z}+p_{2}{\rm e}^{-\lambda z})=1$, and it follows from Lemma \ref{le2.1}, we have
\begin{equation}\label{eq5.1}
\sigma(f)=1.
\end{equation}
Setting $P:=P_d(z,f)$ and differentiating \eqref{eq1.9}, we obtain
\begin{align}\label{eq5.2}
nf^{n-1}(f')^{2}+f^{n}f''+P'=\lambda p_{1}{\rm e}^{\lambda z}-\lambda p_{2}{\rm e}^{-\lambda z}.
\end{align}
Eliminating ${\rm e}^{\lambda z}$ and ${\rm e}^{-\lambda z}$ from \eqref{eq1.9} and \eqref{eq5.2},
respectively, we obtain
\begin{align}\label{eq5.3}
f^{n-1}\chi_1+Q_1=-2\lambda p_{2}{\rm e}^{-\lambda z},
\end{align}
\begin{align}\label{eq5.4}
f^{n-1}\chi_2+Q_2=2\lambda p_{1}{\rm e}^{\lambda z},
\end{align}
where
$\chi_1=n(f')^{2}+ff''-\lambda ff'$, $\chi_2=n(f')^{2}+ff''+\lambda ff'$, $Q_1=P'-\lambda P$,
$Q_2=P'+\lambda P$. Differentiating \eqref{eq5.3} and eliminating ${\rm e}^{-\lambda z}$ yields
\begin{align}\label{eq5.5}
f^{n-1}\varphi=R,
\end{align}
where
\begin{align}\label{eq5.6}
\varphi=\lambda f\chi_1+(n-1)f'\chi_1+f\chi'_1,
\end{align}
\begin{align}\label{eq5.7}
R=-\lambda Q_1-Q'_1.
\end{align}
Since $\deg_{f}R\leq n-2$, it follows from \eqref{eq5.5}, Lemma \ref{le2.5} and Remark \ref{re2.3},
that
\begin{align}\label{eq5.8}
m(r,\varphi)=O(\log r).
\end{align}
As $f$ is an entire, and hence \eqref{eq5.8} gives
\begin{align}\label{eq5.9}
T(r,\varphi)=m(r,\varphi)=O(\log r).
\end{align}
We consider two cases:\\
{\bf Case 1:}\ \ Suppose that $\varphi\equiv 0$. Then $\lambda f\chi_1+(n-1)f'\chi_1+f\chi'_1\equiv 0$.
If $\chi_1\equiv 0$, then $n(f')^{2}+ff''-\lambda ff'\equiv 0$, that is
$f^{n}f'=c_1{\rm e}^{\lambda z}$ for some $c_1\in \mathbb{C}\setminus\{0\}$, and hence
$f(z)=\gamma_1{\rm e}^{\frac{\lambda}{n+1}z}$. Substituting this into \eqref{eq1.9}
and applying Lemma \ref{le2.4}, we obtain $p_2=0$, a contradiction. Therefore $\chi_1\not\equiv 0$.
It then follows from $\varphi\equiv 0$ that $f^{n-1}\chi_1=c_2{\rm e}^{-\lambda z}$ for some
$c_2\in \mathbb{C}\setminus\{0\}$. Substituting this into \eqref{eq5.3} yields
$\left(1+\frac{2\lambda p_2}{c_2}\right)f^{n-1}\chi_1=-Q_1$. If $1+\frac{2\lambda p_2}{c_2}\neq 0$,
then, since $n\geq 3$, Lemma \ref{le2.5} and Remark \ref{re2.3} give $m(r,\chi_1)=O(\log r)$,
$m(r,f\chi_1)=O(\log r)$. As $f$ is an entire, then $T(r,\chi_1)=O(\log r)$, $T(r,f\chi_1)=O(\log r)$,
this leading to $T(r,f)\leq T(r,f\chi_1)+T(r,\frac{1}{\chi_1})=O(\log r)$, a contradiction.
If $1+\frac{2\lambda p_2}{c_2}=0$, then $Q_1=0$ and consequently $P=c_3{\rm e}^{\lambda z}$ for some
$c_3\in \mathbb{C}\setminus\{0\}$. By \eqref{eq5.4},
$f^{n-1}\chi_2+Q_2=\frac{2\lambda p_1}{c_3}P$, so Lemma \ref{le2.5} and Remark \ref{re2.3}
yield $m(r,\chi_2)=O(\log r)$, $m(r,f\chi_2)=O(\log r)$. If $\chi_2\equiv 0$,arguing as in the case
$\chi_1\equiv 0$, we obtain $p_1=0$, a contradiction.
If $\chi_2\not\equiv 0$, arguing as in the case $\chi_1\not\equiv 0$,
we obtain $T(r,f)\leq O(\log r)$, again a contradiction.\par

{\bf Case 2:}\ \
If $\varphi\not\equiv 0$. Substituting $\chi_1=n(f')^{2}+ff''-\lambda ff'$ and
$\chi'_1=-\lambda(f')^{2}+(2n+1)f'f''-\lambda ff''+ff'''$ into \eqref{eq5.6} yield
\begin{align}\label{eq5.10}
\varphi=n(n-1)(f')^{3}+3nff'f''-\lambda^{2}f^{2}f'+f^{2}f'''.
\end{align}
Recall that $\varphi\not\equiv 0$. Then\eqref{eq5.9} and \eqref{eq5.10} give
$m(r,\frac{1}{f})=O(\log r)$. Moreover, by \eqref{eq5.10}, every multiple zero of $f$ is
also a zero of $\varphi$, whence $N_{(2}(r,\frac{1}{f})\leq T(r,\varphi)=O(\log r)$.
Consequently,
\begin{align}\label{eq5.11}
T(r,f)=N_{1)}\Big(r,\frac{1}{f}\Big)+O(\log r).
\end{align}
Differentiating \eqref{eq5.10} results in
\begin{align}\label{eq5.12}
\varphi'=3n^{2}(f')^{2}f''+3nf(f'')^{2}-2\lambda^{2}f(f')^{2}+(3n+2)ff'f'''
-\lambda^{2}f^{2}f''+f^{2}f^{(4)}.
\end{align}
Suppose that $z_1$ is a simple zero of $f$. By \eqref{eq5.10} and \eqref{eq5.12},
we see that $z_1$ is a zero of $(n-1)\varphi'f'-3n\varphi f''$.
Set
\begin{align}\label{eq5.13}
\zeta(z)=\frac{(n-1)\varphi'f'-3n\varphi f''}{f},
\end{align}
then $T(r,\zeta)=O(\log r)$, that is $\zeta(z)$ is a rational function.
It follows from \eqref{eq5.13} that
\begin{align}\label{eq5.14}
f''=A_1f'+B_1f,
\end{align}
where $A_1=\frac{(n-1)\varphi'}{3n\varphi}$, $B_1=-\frac{\zeta}{3n\varphi}$ are rational functions.
It follows from \eqref{eq5.14} that
\begin{align}\label{eq5.15}
f'''=(A_1^{2}+A'_1+B_1)f'+(A_1B_1+B'_1)f,
\end{align}
\begin{align}\label{eq5.16}
f^{(4)}=(A_1^{3}+3A_1A'_1+2A_1B_1+A''_1+2B'_1)f'+(A_1^{2}B_1+2A'_1B_1+A_1B'_1+B_1^{2}+B''_1)f.
\end{align}
Substituting \eqref{eq5.14} and \eqref{eq5.15} into \eqref{eq5.10} yields
\begin{align}\label{eq5.17}
\varphi=a_1(f')^{3}+a_2f(f')^{2}+a_3f^{2}f'+a_4f^{3},
\end{align}
where
\begin{align*}
\left\{\begin{array}{l}
a_1=n(n-1),\\
a_2=3nA_1,\\
a_3=A_1^{2}+A'_1+(3n+1)B_1-\lambda^{2},\\
a_4=A_1B_1+B'_1,
\end{array}\right.
\end{align*}
are rational functions.
Substituting \eqref{eq5.14}-\eqref{eq5.16} into \eqref{eq5.12} yields
\begin{align}\label{eq5.18}
\varphi'=b_1(f')^{3}+b_2f(f')^{2}+b_3f^{2}f'+b_4f^{3},
\end{align}
where
\begin{align*}
\left\{\begin{array}{l}
b_1=3n^{2}A_1,\\
b_2=2(3n+1)A_1^{2}+(3n+2)A'_1+(3n^{2}+3n+2)B_1-2\lambda^{2},\\
b_3=A_1^{3}+3A_1A'_1+(9n+4)A_1B_1+A''_1-\lambda^{2}A_1+(3n+4)B'_1,\\
b_4=A_1^{2}B_1+2A'_1B_1+A_1B'_1+(3n+1)B_1^{2}-\lambda^{2}B_1+B''_1,
\end{array}\right.
\end{align*}
are rational functions. Substituting $A_1=\frac{n-1}{3n}\frac{\varphi'}{\varphi}$,
we obtain $b_1-a_1\frac{\varphi'}{\varphi}=3n^{2}A_1-n(n-1)\frac{\varphi'}{\varphi}=0$.
Combining \eqref{eq5.17} and \eqref{eq5.18}, we obtain
\begin{align}\label{eq5.19}
\Big(b_2-a_2\frac{\varphi'}{\varphi}\Big)f(f')^{2}+
\Big(b_3-a_3\frac{\varphi'}{\varphi}\Big)f^{2}f'+
\Big(b_4-a_4\frac{\varphi'}{\varphi}\Big)f^{3}=0.
\end{align}
Since $f$ is entire, \eqref{eq5.11} and \eqref{eq5.19} imply that
\begin{align*}
b_2-a_2\frac{\varphi'}{\varphi} &
=2(3n+1)A_1^{2}+(3n+2)A'_1+(3n^{2}+3n+2)B_1-2\lambda^{2}-3nA_1\frac{\varphi'}{\varphi}\equiv 0.\\
b_3-a_3\frac{\varphi'}{\varphi} &
=A_1^{3}+3A_1A'_1+(9n+4)A_1B_1+A''_1-\lambda^{2}A_1+(3n+4)B'_1\\
&-[A_1^{2}+A'_1+(3n+1)B_1-\lambda^{2}]\frac{\varphi'}{\varphi}\equiv 0.\\
b_4-a_4\frac{\varphi'}{\varphi} &
=A_1^{2}B_1+2A'_1B_1+A_1B'_1+(3n+1)B_1^{2}-\lambda^{2}B_1+B''_1-(A_1B_1+B'_1)\frac{\varphi'}{\varphi}\equiv 0.
\end{align*}
From $b_2-a_2\frac{\varphi'}{\varphi}\equiv 0$, we obtain
\begin{align*}
2(3n+1)A_1^{2}+(3n+2)A'_1+(3n^{2}+3n+2)B_1-2\lambda^{2}=3nA_1\frac{\varphi'}{\varphi}.
\end{align*}
As $z\to \infty$, $\frac{\varphi'}{\varphi}\to 0$, and hence
$A_1=\frac{(n-1)\varphi'}{3n\varphi}\to 0$. Consequently, $(3n^{2}+3n+2)B_1-2\lambda^{2}\to 0$.
Recalling that $B_1=-\frac{\zeta}{3n\varphi}$, we obtain
\begin{align}\label{eq5.20}
\lim_{z\to \infty}\frac{\zeta}{\varphi}=-\frac{6n\lambda^{2}}{3n^{2}+3n+2}.
\end{align}
Dividing $b_4-a_4\frac{\varphi'}{\varphi}\equiv 0$ by $B_1$ yields
\begin{align*}
A_1^{2}+2A'_1+A_1\frac{B'_1}{B_1}+\frac{B''_1}{B_1}+(3n+1)B_1-\lambda^{2}
=\Big(A_1+\frac{B'_1}{B_1}\Big)\frac{\varphi'}{\varphi}.
\end{align*}
As $z\to \infty$, $\frac{\varphi'}{\varphi}\to 0$, and hence $A_1=\frac{(n-1)\varphi'}{3n\varphi}\to 0$;
moreover, $\frac{B_1'}{B_1}\to 0$, $\frac{B''_1}{B_1}\to 0$. Consequently, $(3n+1)B_1-\lambda^{2}\to 0$.
Recalling that $B_1=-\frac{\zeta}{3n\varphi}$, we obtain
\begin{align}\label{eq5.21}
\lim_{z\to \infty}\frac{\zeta}{\varphi}=-\frac{3n\lambda^{2}}{3n+1}.
\end{align}
Comparing \eqref{eq5.20} and \eqref{eq5.21} gives
$\frac{6n}{3n^{2}+3n+2}=\frac{3n}{3n+1}$,
so that $n(n-1)=0$, i.e., $n=0$ or $n=1$, contradicting $n\ge 3$.\par

\end{proof}


\scriptsize
\begin{center}

\end{center}

\noindent Min Feng Chen\\
School of Mathematics and Statistics\\
Guangdong University of Foreign Studies\\
Guangzhou, 510006\\
P. R. China\\
Email: minfengchen2018@gdufs.edu.cn\\

\noindent Zong-Sheng Gao\\
LMIB and School of Mathematical Sciences\\
Beihang University\\
Beijing, 100191\\
P. R. China\\
E-mail: zshgao@buaa.edu.cn\\

\end{document}